\documentclass[]{amsart}

\usepackage{graphicx}
\usepackage{dynkin-diagrams}
\usepackage{physics}
%\usepackage{tikz} % for drawing diagrams
%\usetikzlibrary{calc}
%\usetikzlibrary{arrows.meta}
\usepackage{hyperref} 
\hypersetup{
	colorlinks=true,
	linkcolor=blue,
	hypertexnames=false,
	filecolor=black,      
	urlcolor=blue,
	citecolor=blue
}
\pdfmapfile{=OrnementsADF.map}
\usepackage{mathtools}
%\usepackage{pgfplots}
%\usepackage{adforn}  % for neat symbols in line breaks in informal notes

% Theorem style and numbering
\usepackage[nameinlink]{cleveref}

\theoremstyle{plain}% default
\newtheorem{theorem}{Theorem}[section]

\newtheorem{lemma}[theorem]{Lemma}
\newtheorem{proposition}[theorem]{Proposition}
\newtheorem{corollary}[theorem]{Corollary}

% manual theorem

\theoremstyle{definition}
\newtheorem{definition}[theorem]{Definition}

\newtheorem{question}[theorem]{Question}

\theoremstyle{remark}
\newtheorem{remark}[theorem]{Remark}

%%%%%%%%%%%%%%%%%%%%%%%%%%%%%%%%%%%%
% Math operators and commands

% Lie theory

% Lie groups
\DeclareMathOperator{\SL}{SL}
\DeclareMathOperator{\Sp}{Sp}

\DeclareMathOperator{\SU}{SU}

% Linear algebra

%\DeclareMathOperator{\trace}{trace} %already defined in physics

\DeclareMathOperator{\Span}{Span}
% \DeclareMathOperator{\rank}{rank} %already defined in physics

 % Grassmannian

% Lie algebra shorthands
\DeclareMathOperator{\mfa}{\mathfrak{a}}
\DeclareMathOperator{\mfp}{\mathfrak{p}}
\DeclareMathOperator{\mfk}{\mathfrak{k}}
\DeclareMathOperator{\mfg}{\mathfrak{g}}

\DeclareMathOperator{\mfu}{\mathfrak{u}}

\DeclareMathOperator{\mfsp}{\mathfrak{sp}}
\DeclareMathOperator{\mfsl}{\mathfrak{sl}}

% Bolded spaces
\DeclareMathOperator{\X}{\mathbb{X}}

\DeclareMathOperator{\R}{\mathbb{R}}
\DeclareMathOperator{\C}{\mathbb{C}}

\DeclareMathOperator{\HH}{\mathbb{H}}

% KLP notations

% Orbits and metric spaces

% Riemannian geometry

% calculus

% Miscellaneous
 % Modified Killing form

\def\acts{\curvearrowright}
\def\F{{\mathcal{F}}}

\newcommand\scalemath[2]{\scalebox{#1}{\mbox{\ensuremath{\displaystyle #2}}}}

\newcommand{\diag}[1]{\mathrm{Diag}(#1)}
\newcommand{\superdiag}[2]{\mathrm{Diag}^#1(#2)}
\DeclareMathOperator{\Iso}{Iso}

\title{Restrictions on Anosov subgroups of $\Sp(2n,\R)$}
\author{Subhadip Dey}
\address{Department of Mathematics,
	Yale University, 
	219 Prospect St, New Haven, CT 06511
}
\email{subhadip.dey@yale.edu}

\author{Zachary Greenberg}
\address{Max Planck Institute for Mathematics in the Sciences in Leipzig,
	Inselstraße 22,
	04103 Leipzig,
	Germany
}
\email{greenberg@mis.mpg.de}

\author{J. Maxwell Riestenberg}
\address{
	Max Planck Institute for Mathematics in the Sciences in Leipzig,
	Inselstraße 22,
	04103 Leipzig,
	Germany
}
\email{riestenberg@mis.mpg.de}

\thanks{
	Z. Greenberg is funded by the Deutsche Forschungsgemeinschaft (DFG, German Research Foundation) under both Project-ID 281071066-TRR 191 and Project ID 338644254-SPP2026  as well as PosLieRep ERC 101018839. 
	J.M. Riestenberg is supported by the RTG 2229 ``Asymptotic Invariants and Limits of Groups and Spaces" and by the DFG under Project-ID 338644254 - SPP2026. 
	Z. Greenberg and J.M. Riestenberg completed this work at the University of Heidelberg}

\begin{document}
	
	\begin{abstract}
		Let $n\in\mathbb{N}$ and let $\Theta \subset \{1,\dots,n\}$ be a non-empty subset.
		We prove that if $\Theta$ contains an odd integer, then any $P_\Theta$-Anosov subgroup of ${\rm Sp}(2n,\mathbb{R})$ is virtually isomorphic to a free group or a surface group. 
		In particular, any Borel Anosov subgroup of ${\rm Sp}(2n,\mathbb{R})$ is virtually isomorphic to a free or surface group.
		On the other hand, if $\Theta$ does not contain any odd integers, then there exists a $P_\Theta$-Anosov subgroup of ${\rm Sp}(2n,\mathbb{R})$ which is not virtually isomorphic to a free or surface group.
		We also exhibit new examples of maximally antipodal subsets of certain flag manifolds; these arise as limit sets of rank $1$ subgroups.
	\end{abstract}
	
	\maketitle
	
	\section{Introduction}
	Since their introduction by Labourie \cite{Lab06}, Anosov subgroups of semisimple Lie groups have come to be regarded as the right generalization of convex cocompact actions on hyperbolic spaces to higher rank. An Anosov subgroup $\Gamma$ of a semisimple Lie group $G$ is word-hyperbolic and comes equipped with a boundary map from the Gromov boundary of $\Gamma$ to a flag manifold $\F=G/P$ \cite{GW12}. Alternatively, they can be characterized in terms of their coarse geometry as those subgroups with uniformly regular undistorted orbits in the symmetric space $\X=G/K$ \cite{KLP17}.
	
	It is intriguing to ask how large is the class of word-hyperbolic groups that appear as Anosov subgroups of semisimple Lie groups.
	Danciger-Gu\'eritaud-Kassel \cite{DGK18} showed that a large family of word-hyperbolic groups, namely, any hyperbolic right-angled Coxeter group admit Anosov representations.
	However, Kapovich \cite{KapPolygon} gave examples of infinite hyperbolic groups whose linear representations always have finite image and, as a consequence, which cannot be realized as Anosov subgroups. 
	Apart from linearity, no other obstructions for hyperbolic groups are currently known which prohibit them from admitting Anosov representations.
	Cf. \cite[Problem 50.1]{canary2021anosov}.
	
	On the other hand, fixing a semisimple Lie group $G$ and a set of simple roots $\Theta$, one may seek to understand obstructions on hyperbolic groups admitting $\Theta$-Anosov representations into $G$.
	The work of Canary-Tsouvalas \cite{CT20} gave an upper bound on the cohomological dimension of $\{k\}$-Anosov subgroups of $\SL(n,\R)$.
	In this regard, a challenging question was posed by Andr\'{e}s Sambarino (see \cite[\S7]{CT20}): 
	\begin{question}[Sambarino]
		Is every Borel Anosov subgroup of $\SL(n,\R)$ virtually isomorphic to a free or surface group?
	\end{question}
	While this question remains open in general, progress has been made by Canary-Tsouvalas \cite{CT20}, Tsouvalas \cite{Tso20}, and the first author \cite{dey22}.
	
	In this paper, we address this question for $G = \Sp(2n,\R)$ and arbitrary $\Theta$. 
	Before stating our main result, we set up a notation for the simple roots:
	The root system for the symplectic Lie algebra $\mathfrak{sp}(2n,\R)$ is of type $\mathrm{C}_n$; it has $n$ simple roots and we denote them by natural numbers $1,\dots,n$ in the order as they appear in the Dynkin diagram of type $\mathrm{C}_n$, with $n$ denoting the unique long root.
	See \Cref{fig:dynkin diagram Cn}.
	
	\begin{figure}[h]
		\centering
		\begin{equation*}
			\begin{dynkinDiagram}[text style/.style={scale=1},
				edge length=.8cm,
				scale=2,
				labels={1,2,n-2,n-1,n},
				label macro/.code={{{#1}}}
				]C{}
			\end{dynkinDiagram}
		\end{equation*}
		\caption{Dynkin diagram of type ${\rm C}_n$}
		\label{fig:dynkin diagram Cn}
	\end{figure}
	
	Our main result is as follows.
	
	\begin{theorem}\label{thm:main}
		Let $n\in\mathbb{N}$ and let $\Theta \subset \{1,\dots,n\}$ be a non-empty subset.
		\renewcommand{\theenumi}{(\roman{enumi})}
		\begin{enumerate}
			\item If $\Theta$ contains an odd integer, then any $\Theta$-Anosov subgroup of ${\rm Sp}(2n,\mathbb{R})$ is virtually isomorphic to a free group or a surface group.
			\item If $\Theta$ does not contain any odd integers, then the fundamental group of any closed, orientable, hyperbolic 3-manifold may be realized as a $\Theta$-Anosov subgroup of ${\rm Sp}(2n,\mathbb{R})$.
		\end{enumerate}
	\end{theorem}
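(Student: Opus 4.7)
For part (i), the plan is to reduce immediately to $\Theta = \{k\}$ for a single odd integer $k$, since any $\Theta$-Anosov subgroup is automatically $\{k\}$-Anosov for each $k \in \Theta$. A $\{k\}$-Anosov representation equips $\Gamma$ with a continuous, equivariant, antipodal boundary embedding $\xi_k \colon \partial\Gamma \hookrightarrow \Sp(2n,\R)/P_{\{k\}}$ into the flag manifold of isotropic $k$-planes. The standard inclusion $\Sp(2n,\R) \hookrightarrow \SL(2n,\R)$ identifies the $\mathrm{C}_n$-simple root $\alpha_k$ with the restrictions of the $\mathrm{A}_{2n-1}$-simple roots indexed by $k$ and $2n-k$ to the symplectic Cartan, so $\Gamma$ is $\{k, 2n-k\}$-Anosov in $\SL(2n,\R)$; when $k$ is odd, so is $2n-k$, and the two $\SL(2n,\R)$-boundary maps are related by the symplectic duality $\xi_{2n-k} = \xi_k^{\perp}$. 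The key step is to show that this constrained antipodal configuration, with both indices odd, forces the topological dimension of $\partial\Gamma$ to be at most $1$, extending the odd-parity arguments of Canary--Tsouvalas, Tsouvalas, and the first author for $\{k\}$-Anosov subgroups of $\SL(d,\R)$. Combined with the Bestvina--Mess formula, this bounds $\mathrm{vcd}(\Gamma) \leq 2$, and hence $\Gamma$ is virtually free or a surface group.

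For part (ii), the plan is constructive. Let $\Gamma$ be a cocompact lattice in $\SL(2,\C)$, uniformized via $\rho_0 \colon \Gamma \to \SL(2,\C)$, and set $k = \lfloor n/2 \rfloor$. Compose $\rho_0$ with the odd symmetric power $\Sym^{2k-1}$ of the standard representation: it carries an invariant complex symplectic form, giving $\SL(2,\C) \to \Sp(2k,\C) \hookrightarrow \Sp(4k,\R)$, where the last embedding views the complex symplectic form as a real symplectic form on the underlying $4k$-dimensional space. Finally include $\Sp(4k,\R) \hookrightarrow \Sp(2n,\R)$ block-diagonally on a symplectic summand, acting trivially on an orthogonal $\Sp(2n-4k,\R)$-summand (which is vacuous when $n$ is even). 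A direct calculation shows the Cartan generator $\mathrm{diag}(1,-1) \in \mathfrak{sl}(2,\C)$ maps to the Weyl-chamber vector $(a_1, \ldots, a_n) \in \mathfrak{a}_{\Sp(2n,\R)}$ with $a_{2j-1} = a_{2j} = 2k - 2j + 1$ for $j = 1, \ldots, k$ and $a_j = 0$ for $j > 2k$; every even $\mathrm{C}_n$-simple root evaluates strictly positively on this vector, while every odd simple root evaluates to zero. Since $\Gamma$ is convex cocompact in the rank-one group $\SL(2,\C)$, its Cartan projection grows linearly in word length, so the strict positivity yields linear divergence of every even simple root along $\mu(\rho(\gamma))$, establishing the $\{2, 4, \ldots, 2\lfloor n/2 \rfloor\}$-Anosov property; a fortiori, $\rho$ is $\Theta$-Anosov for every $\Theta$ consisting of even integers.

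The main obstacle is the topological dimension bound in part (i). Antipodality in the flag manifold of isotropic $k$-planes is genuinely weaker than antipodality in the full Grassmannian $\Gr(k,2n)$, as it requires only symplectic transversality rather than full linear transversality, so existing odd-$k$ dimensional bounds for antipodal subsets of $\Gr(k,d)$ do not apply verbatim. I expect the argument to exploit the symplectic duality $\xi_{2n-k} = \xi_k^{\perp}$ to package the boundary into a constrained pair of ordinary Grassmannian embeddings, and then to reduce to (or genuinely generalize) the known odd-parity obstruction. Part (ii), by contrast, should amount to a routine Cartan projection verification once the odd symmetric power is identified as the correct construction.
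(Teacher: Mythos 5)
Your part (ii) is essentially the paper's construction: the odd symmetric power $\Sym^{2k-1}$ of $\SL(2,\C)$ landing in $\Sp(2k,\C)\subset\Sp(4k,\R)$, padded by a trivial symplectic summand when $n$ is odd, with the Cartan generator mapping to a vector on which every even simple root of $\mathrm{C}_n$ is strictly positive; the paper closes this with \cite[Proposition 4.7]{GW12} rather than a Cartan-projection divergence argument, but these are interchangeable. That half is fine (modulo citing the lift of the lattice from $\PSL(2,\C)$ to $\SL(2,\C)$).

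Part (i), however, has a genuine gap, and it is precisely at the step you flag as the ``main obstacle.'' First, the reduction to known odd-parity results in $\SL(2n,\R)$ provably cannot cover all cases: the paper notes that the inclusion $\Sp(2n,\R)\hookrightarrow\SL(2n,\R)$ recovers the theorem only when $n=2$, when $n\not\equiv 0 \bmod 4$, or when $n$ is odd and $n\in\Theta$. For the remaining cases a new input is required, and the paper's new input is a sign identity specific to the symplectic group: for $g\in\Sp(2n,\R)$ the antiprincipal $k\times k$ minor satisfies $p_k(g^{-1})=(-1)^k p_k(g)$ (\Cref{lem:detCondition}, using $g^{-1}=-Jg^TJ$). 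This yields Property (I) for $\F_\Theta$ whenever $\Theta$ contains an odd integer (\Cref{lem:symplectc_propI}); it is not obtained by ``packaging'' the boundary into a pair of Grassmannian embeddings, and the analogous $\SL$ statement is simply false in general. Second, even granting your key step as stated, the conclusion does not follow: bounding the topological dimension of $\partial_\infty\Gamma$ by $1$ (equivalently, via Bestvina--Mess, $\mathrm{vcd}(\Gamma)\le 2$) does not force $\Gamma$ to be virtually free or a surface group --- hyperbolic groups with Sierpi\'nski-carpet or Menger-curve boundary have one-dimensional boundary and are neither. What the paper actually proves (\Cref{thm:Anosovrestriction}) is much stronger: Property (I) makes every antipodal circle locally maximally antipodal, so once $\partial_\infty\Gamma$ contains an embedded circle (Bonk--Kleiner, available whenever $\Gamma$ is not virtually free), antipodality of the boundary embedding forces that circle to be open and closed, hence all of $\partial_\infty\Gamma$; one then concludes with Tukia--Gabai--Casson--Jungreis. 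Your argument would need to be upgraded from a dimension bound to a statement identifying $\partial_\infty\Gamma$ with $S^1$, and the mechanism for that is exactly the maximal-antipodality phenomenon you have not supplied.
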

	
	A special case of the first statement is that every Borel Anosov\footnote{I.e., $\Theta$-Anosov, where $\Theta = \{1,\dots,n\}$.} subgroup of $\Sp(2n,\R)$, $n\in\mathbb{N}$, is virtually isomorphic to a free or surface group.
	A Borel Anosov subgroup of $\Sp(2n,\R)$ can be viewed as a Borel Anosov subgroup of $\SL(2n,\R)$ via the natural inclusion \[\Sp(2n,\R)\hookrightarrow \SL(2n,\R),\] so this special case also follows from \cite{CT20} when $n=2$ and from \cite{dey22} when $n \not\equiv 0$ mod $4$.
	More generally, the above inclusion takes $\Theta$-Anosov subgroups of $\Sp(2n,\R)$ to $\Theta'$-Anosov subgroups of $\SL(2n,\R)$, where $\Theta'$ is the subset of $\{1,\dots,2n-1\}$ consisting of $\Theta$ and $\{2n-k :\ k \in \Theta\}$. 
	In particular, when $n$ is odd and $\Theta$ contains $\{n\}$, the result follows from \cite{Tso20}. 
	After circulating an early draft of this paper, we learned that \Cref{thm:main}(i) was independently obtained by Beatrice Pozzetti and Kostas Tsouvalas with different techniques \cite{PT23}.
	
	\medskip
	
	The restrictions on Anosov subgroups obtained here and in \cite{dey22} are based on criteria for antipodal circles in partial flag manifolds to be {\em maximally antipodal}. 
	It turns out that any $\Theta$-Anosov subgroup of $G$ with a maximally antipodal limit set cannot appear as an infinite index subgroup of another $\Theta$-Anosov subgroup of $G$, see \Cref{prop:maximal_anosov}.
	We produce examples of higher-dimensional spheres which are maximally antipodal in certain partial flag manifolds. 
	We let $\mathcal{F}_{2,2n-2}$ denote the partial flag manifold consisting of pairs $(V,W)$ of vector subspaces of $\R^{2n}$ with $\dim(V)=2$, $\dim(W)=2n-2$ and $V\subset W$.
	We let $\Iso_2(\R^{2n})$ denote the space of isotropic $2$-planes in $\R^{2n}$ with respect to a symplectic form.
	
	\begin{theorem}\label{thm:main2}\leavevmode
		\renewcommand{\theenumi}{(\roman{enumi})}
		\begin{enumerate}
			\item There exists a maximally antipodal subset of $\mathcal{F}_{2,2n-2}$ homeomorphic to $S^2$.
			\item There exists a maximally antipodal subset of $\Iso_2(\R^{2n})$ homeomorphic to $S^{2n-3}$.
		\end{enumerate}
	\end{theorem}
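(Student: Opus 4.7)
The plan is to realize each maximally antipodal sphere as the limit set of a rank $1$ subgroup of $\Sp(2n,\R)$ and then verify antipodality and, as the main step, maximality.

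For part (ii), identify $\R^{2n}=\C^n$ and fix a Hermitian form $h$ of signature $(n-1,1)$, so that $\omega:=\mathrm{Im}(h)$ is a real symplectic form preserved by $\SU(n-1,1)$. A complex line $\ell\subset\C^n$ is $\omega$-isotropic as a real $2$-plane if and only if it is $h$-isotropic, yielding an embedding $\partial\HH^{n-1}_{\C}\cong S^{2n-3}\hookrightarrow\Iso_2(\R^{2n})$ realized as the limit set of any cocompact lattice $\Gamma\subset\SU(n-1,1)$. Antipodality is direct: $\ell^{\perp_\omega}=\ell^{\perp_h}$ for $h$-isotropic $\ell$, and if $\ell_1\subset\ell_2^{\perp_h}$ for distinct $h$-isotropic lines then $\ell_1+\ell_2$ would be a complex $2$-dimensional totally $h$-isotropic subspace, contradicting that the maximum isotropic dimension for signature $(n-1,1)$ is $1$.

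The substantive step is maximality for part (ii). Suppose $V\in\Iso_2(\R^{2n})$ is antipodal to every $h$-isotropic complex line. For nonzero $v\in V$, the complex hyperplane $v^{\perp_h}$ has Hermitian signature $(n-2,1)$, $(n-2,0)$, or $(n-1,0)$ according as $h(v,v)$ is positive, zero, or negative. A case analysis shows the antipodality condition forces either $V$ itself to be an $h$-isotropic complex line or $h|_V$ to be negative definite as a real quadratic form. The latter case is ruled out as follows: the $\omega$-isotropy of $V$ is equivalent to $h(v,w)\in\R$ for all $v,w\in V$, which implies $V$ and $iV$ are orthogonal for $\mathrm{Re}(h)$; if $V$ is not a complex line then $V\cap iV=0$, so $V+iV$ would be a real $4$-dimensional subspace on which $\mathrm{Re}(h)$ is negative definite, contradicting that $\mathrm{Re}(h)$ has real signature $(2n-2,2)$.

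For part (i), take a cocompact lattice $\Gamma\subset\SL(2,\C)$ and consider the inclusions $\SL(2,\C)\hookrightarrow\Sp(4,\R)\hookrightarrow\Sp(2n,\R)$, where the first uses the $\SL(2,\C)$-invariant symplectic form $\omega_1=\mathrm{Re}(\det_\C)$ on $\R^4=\C^2$ (making every complex line Lagrangian) and the second is the block embedding $g\mapsto g\oplus\mathrm{id}_{\R^{2n-4}}$. Computing Cartan projections shows $\Gamma$ is $\{2\}$-Anosov in $\Sp(2n,\R)$, with limit set the set of complex lines in $\R^4\subset\R^{2n}$, a copy of $\C\PP^1\cong S^2$ in $\Iso_2(\R^{2n})$; pushing forward via $V\mapsto(V,V^{\perp_\omega})$ yields $A'=\{(\ell,\ell+\R^{2n-4}):\ell\in\C\PP^1\}\subset\mathcal{F}_{2,2n-2}$. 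Antipodality holds since two distinct complex lines in $\R^4$ span $\R^4$. For maximality, if $(V,W)\in\mathcal{F}_{2,2n-2}$ is antipodal to every element of $A'$, projecting $V$ to $\R^4$ along $\R^{2n-4}$ and using that every nonzero vector in $\R^4$ generates a complex line forces $V$ itself to be a complex line $\ell_0\subset\R^4$; but then $V=\ell_0\subset\ell_0+\R^{2n-4}$ precludes antipodality with $(\ell_0,\ell_0+\R^{2n-4})\in A'$, so $(V,W)$ must already lie in $A'$. The chief technical obstacle is the signature argument ruling out the negative-definite case in part (ii) via the $4$-dimensional subspace $V+iV$; part (i) reduces more elementarily to a projection/dimension argument together with the observation that the inclusion $V=\ell_0\subset\ell_0+\R^{2n-4}$ forces $(V,W)=(\ell_0,\ell_0+\R^{2n-4})$.
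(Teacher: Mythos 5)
Your proposal is correct in substance but takes a genuinely different, and considerably more elementary, route than the paper for both parts. For (i), the paper realizes the maximally antipodal $S^2$ as the $\{2,2n-2\}$-limit set of the \emph{irreducible} representation $\rho_n\colon\SL(2,\C)\to\Sp(2n,\R)$, and the maximality proof (\Cref{thm:sl2c limit set is maximally antipodal}) requires the explicit form of $\rho_n$, an analysis of the top-right $2\times2$ block of $\exp(\alpha X+\beta Y)$ in the basis $I,R,T,P$, Bezout's theorem to produce a real common zero of two polynomials, and the intermediate value theorem. Your reducible model $g\mapsto g\oplus\mathrm{id}_{\R^{2n-4}}$ replaces all of this with the observation that every nonzero vector of $\R^4$ lies on a complex line: writing $\pi\colon\R^{2n}\to\R^4$ for the projection, either $V\cap\R^{2n-4}\neq0$ (so $V$ meets every $\ell\oplus\R^{2n-4}$) or $0\neq v\in V$ has $\pi(v)\neq0$ and $v\in\C\pi(v)\oplus\R^{2n-4}$; maximality is immediate, and in fact the intermediate step in your write-up ("forces $V$ to be a complex line") is unnecessary --- transversality to all of $A'$ is outright impossible. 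The trade-off is that the paper's harder computation buys more: its $S^2$ is the limit set of a $\Theta_{\rm even}$-Anosov subgroup, which feeds into \Cref{thm:main}(ii), \Cref{cor:sl2c limit set is MA in Iso2 and F_even} and \Cref{prop:maximal_anosov}, whereas your subgroup is only $\{2\}$-Anosov in $\Sp(2n,\R)$; your argument proves \Cref{thm:main2}(i) as stated but not \Cref{thm:sl2c limit set is maximally antipodal}. For (ii) you use the same subgroup $\SU(n-1,1)$ as the paper but replace its horocyclic-coordinate computation (\Cref{thm:MaximalAntipodalSU}) with a coordinate-free signature argument, which is clean and correct. Two small points should be tightened in (ii): your dichotomy omits the subcase in which $V$ is a negative-definite \emph{complex} line, which must be excluded by noting that then $\omega(v,iv)=h(v,v)\neq0$, so such a $V$ is not $\omega$-isotropic; and when $n=2$ the positive-definite case cannot be handled via isotropic lines in $v^{\perp_h}$ (that hyperplane is then definite) and instead needs the same $V+iV$ argument against the signature $(2n-2,2)$ of $\mathrm{Re}(h)$.
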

	
	These spheres arise as limit sets of totally geodesic copies of $\HH^3$ and $\mathbb{CH}^{n-1}$ inside the symmetric space $\mathbb{X} = \Sp(2n,\R)/U(n)$. 
	In particular, we note that the partial flag manifold of isotropic $2$-planes $\Iso_2(\R^6,\omega)$ admits maximally antipodal subsets homeomorphic to $S^2$ and $S^3$.
	
	\medskip
	
	\subsection*{Outline of the paper}
	
	The proof of \Cref{thm:main}(i) is based on extending the techniques of \cite{dey22} to partial flag manifolds in general, which we do in \S\ref{sec:FlagManifolds}, supplemented with an elementary observation about partial flag manifolds associated to $\Sp(2n,\R)$; see \S\ref{sec:res_symplectic}. 
	
	The proof of \Cref{thm:main}(ii), which we discuss in \S\ref{sec:examples}, is constructive: We show that for all $n \ge 2$, there exists a representation $\rho_n \colon \SL(2,\C) \to \Sp(2n,\R)$, which geometrically corresponds to a certain totally geodesic embedding of $\HH^3$ in the symmetric space $\mathbb{X} = \Sp(2n,\R)/U(n)$, such that $\rho_n$ maps any convex cocompact subgroup of $\SL(2,\C)$ to a $\Theta_{\rm even}$-Anosov subgroup of $\Sp(2n,\R)$; see \Cref{thm:SL2Cexample}.
	Since the fundamental group of any closed, orientable, hyperbolic 3-manifold can be realized as a uniform lattice in $\SL(2,\C)$ (see \cite{culler1986lifting}), \Cref{thm:main}(ii) follows as a special case.
	
	The copies of $S^2$ that we mention in \Cref{thm:main2}(i) arise as flag limit sets of $\rho_n(\SL(2,\C))$. 
	We prove they are maximally antipodal in \Cref{thm:sl2c limit set is maximally antipodal}. 
	A similar construction applied to the inclusion $\SU(n-1,1) \subset \Sp(2n,\R)$ produces the $S^{2n-3}$ in \Cref{thm:main2}(ii). 
	We prove these are maximally antipodal in \Cref{thm:MaximalAntipodalSU}.
	
	\subsubsection*{Acknowledgements} 
	
	We are grateful to Anna Wienhard, Fanny Kassel and Kostas Tsouvalas for interesting conversations related to the results of this paper.
	We would also like to thank Beatrice Pozzetti for a useful suggestion which simplified and improved \Cref{thm:main}(ii). 
	We thank the referee for valuable feedback which improved the exposition of the paper.
	
	\section{Preliminaries}\label{sec:FlagManifolds}
	
	Let $G$ be a connected real semisimple Lie group with a finite center.
	We fix a Cartan decomposition of the Lie algebra $\mathfrak{g}$ of $G$,
	\[
	\mathfrak{g} = \mathfrak{k} \oplus \mathfrak{p},
	\]
	and then we fix a maximal abelian subspace $\mathfrak{a}\subset \mathfrak{p}$.
	Let $\Sigma$ denote the set of {\em roots}, i.e., the set of all nonzero elements $\alpha\in\mathfrak{a}^*$ for which the associated weight space
	$
	\mathfrak{g}_\alpha = \{ Y\in \mathfrak{g} :\ ({\rm ad}\,A) Y = \alpha(A)Y, \; \forall A\in\mathfrak{a} \}
	$
	is nonzero.
	The adjoint action of $\mathfrak{a}$ on $\mathfrak{g}$ is a commuting family of diagonalizable linear transformations and hence we obtain a simultaneous eigenspace decomposition, called the {\em (restricted) root space decomposition} of $\mathfrak{g}$:
	\[
	\mathfrak{g} = \mathfrak{g}_0 \oplus \bigoplus_{\alpha\in\Sigma} \mathfrak{g}_\alpha.
	\]
	We choose a set of {\em positive roots} $\Sigma^+ \subset \Sigma$, which (equivalently) corresponds to a choice of a
	(closed) {\em positive Weyl chamber} $\overline{\mathfrak{a}^+} = \{A\in \mathfrak{a} :\ \alpha(A) \ge 0,\; \forall \alpha\in\Sigma^+\}$.
	The set of {\em simple roots} is denoted by $\Delta \subset \Sigma^+$. 
	The {\em (restricted) Weyl group} is the group generated by reflections in the roots.
	It contains a unique element $w_0$ taking $\mfa^+$ to $-\mfa^+$. 
	The {\em opposition involution} $-w_0: \mfa \to \mfa$ induces an operation on the simple roots 
	\[
	{\rm i} \colon \Delta \to \Delta.
	\]
	that we also call the opposition involution. 
	Note that for $G = \Sp(2n,\R)$, which is the main focus of this article, ${\rm i} \colon \Delta \to \Delta$ is the identity map. 
	
	Every (nonempty) subset $\Theta \subset \Delta$ determines a pair of nilpotent subalgebras
	\[
	\mathfrak{u}_\Theta \coloneqq \sum_{\alpha\in \Sigma^+_\Theta} \mathfrak{g}_\alpha \quad\text{and}\quad \mathfrak{u}_\Theta^{\rm opp} \coloneqq \sum_{\alpha\in \Sigma^+_\Theta} \mathfrak{g}_{-\alpha},
	\]
	where $\Sigma^+_\Theta \coloneqq \Sigma^+\setminus {\rm Span}(\Delta\setminus \Theta)$.
	The normalizer of $\mathfrak{u}_{\Theta}$ in $G$ for the adjoint action $G \acts \mathfrak{g}$ is called the {\em standard\footnote{Here ``standard'' means standard with respect to the above choices.} parabolic subgroup}, denoted by $P_\Theta$.
	Similarly, normalizer of $\mathfrak{u}^{\rm opp}_{\Theta}$ in $G$ is the standard {\em opposite} parabolic subgroup, which is denoted by $P_\Theta^{\rm opp}$.
	The closed subgroup $U_\Theta = \exp (\mathfrak{u}_\Theta)$ of $P_\Theta$ is called the {\em unipotent radical} of $P_\Theta$.
	
	Given a nonempty subset $\Theta \subset \Delta$, the corresponding {\em flag manifold}
	\[
	\F_\Theta = G/P_\Theta
	\]
	is a $G$-homogeneous space.
	In this paper, we only consider those standard parabolic subgroups $P_\Theta$ which are conjugate to its opposite parabolic subgroup $P_\Theta^{\rm opp}$; equivalently, $\Theta = {\rm i}(\Theta)$.
	In $ \F_\Theta $, the unique fixed point of $P_\Theta$ (resp. $P_\Theta^{\rm opp}$) will be denoted by
	$\tau_\Theta$ (resp. $\tau_\Theta^{\rm opp}$).
	The action $P_\Theta\acts \F_\Theta$ has a unique open orbit $C(\tau_\Theta) \coloneqq P_\Theta \cdot \tau_\Theta^{\rm opp}$.
	For any point $\tau \in C(\tau_\Theta)$,  the unipotent radical $U_\Theta$ of $P_\Theta$ yields  parametrization $U_\Theta \to C(\tau_\Theta)$, given by
	$u \mapsto u\cdot \tau$.
	
	A pair of points $\tau_\pm \in\F_\Theta$ is called {\em antipodal} (or {\em transverse}) if there exists $g\in G$ such that $g \tau_- = \tau_\Theta$ and $g \tau_+ \in C(\tau_\Theta)$.
	For $\tau\in\F_\Theta$, the set of all points in $\F_\Theta$ antipodal to $\tau$ is denoted by $C(\tau)$.
	Clearly, if $\tau_\pm \in\F_\Theta$ is a pair of antipodal points, then $g\tau_\pm$
	is too, for any $g\in G$.
	Moreover, for all $\tau\in\F_\Theta$ and all $g\in G$,
	$
	g\cdot C(\tau) = C(g\tau).
	$
	
	\subsection{The inversion property}
	Let $\Theta$ be an ${\rm i}$-invariant subset of $\Delta$.
	
	\begin{definition}[Inversion map]\label{def:inversion map}\leavevmode
		The {\em inversion map}  is the involution
		\[\iota : C(\tau_\Theta) \to C(\tau_\Theta), \quad
		\iota(\tau) = u^{-1}_{\tau} \tau_\Theta^{\rm opp},
		\]
		where $u_\tau\in U_\Theta$ is the unique element such that $\tau = u_\tau\tau_\Theta^{\rm opp}$.
	\end{definition}
	
	\begin{lemma}\label{lem:inversion_preserves}
		The inversion map $\iota : C(\tau_\Theta) \to C(\tau_\Theta)$
		preserves $C(\tau_\Theta) \cap C(\tau_\Theta^{\rm opp})$.
	\end{lemma}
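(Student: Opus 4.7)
The image of $\iota$ lies in $C(\tau_\Theta)$ by construction (since $\iota(\tau) = u_\tau^{-1}\tau_\Theta^{\rm opp} \in U_\Theta \cdot \tau_\Theta^{\rm opp} = C(\tau_\Theta)$), so the real content is that if $\tau$ additionally lies in $C(\tau_\Theta^{\rm opp})$, then so does $\iota(\tau)$. The plan is to reduce this to the symmetry of the antipodality relation, which is available because of the standing assumption $\Theta={\rm i}(\Theta)$.

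First I would record the two elementary ingredients I intend to use. (a) Antipodality is $G$-equivariant: $g\cdot C(\sigma) = C(g\sigma)$ for all $g\in G$ and $\sigma\in\F_\Theta$; this is built into the definition and is noted in the preceding paragraph. (b) Antipodality is symmetric, i.e.\ $\sigma_1\in C(\sigma_2)\iff \sigma_2\in C(\sigma_1)$. For this, choose a representative $\dot{w}_0\in N_G(\mfa)$ of the longest Weyl element; since $\Theta={\rm i}(\Theta)$, one can modify $\dot{w}_0$ within $P_\Theta$ so that it swaps $\tau_\Theta$ and $\tau_\Theta^{\rm opp}$, and the pair $(\tau_\Theta,\tau_\Theta^{\rm opp})$ being antipodal then forces $(\tau_\Theta^{\rm opp},\tau_\Theta)$ to be antipodal as well. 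Transitivity of $G$ on ordered antipodal pairs together with (a) then upgrades this to symmetry for every pair.

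Next, given $\tau\in C(\tau_\Theta)\cap C(\tau_\Theta^{\rm opp})$, write $\tau = u_\tau\tau_\Theta^{\rm opp}$ with $u_\tau\in U_\Theta$ the unique such element, so $\iota(\tau) = u_\tau^{-1}\tau_\Theta^{\rm opp}$. By (a),
\[
\iota(\tau) \in C(\tau_\Theta^{\rm opp})
\iff u_\tau\iota(\tau)\in u_\tau C(\tau_\Theta^{\rm opp}) = C(u_\tau \tau_\Theta^{\rm opp}) = C(\tau),
\]
and $u_\tau\iota(\tau) = \tau_\Theta^{\rm opp}$. So I have reduced the claim to $\tau_\Theta^{\rm opp}\in C(\tau)$, which by (b) is equivalent to $\tau\in C(\tau_\Theta^{\rm opp})$, i.e.\ exactly the hypothesis.

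The only substantive point is the symmetry of antipodality; everything else is a one-line equivariance manipulation. If the paper has already stated symmetry of antipodality (as a standard fact about the $G$-action on $\F_\Theta\times\F_\Theta$ under $\Theta={\rm i}(\Theta)$), the proof becomes a two-line computation and no obstacle remains; otherwise the only mildly technical step is justifying symmetry via the Weyl-element swap described above.
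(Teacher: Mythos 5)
Your proof is correct and is essentially the paper's argument: both reduce the claim to applying the element $u_\tau^{\pm1}\in U_\Theta$ to the antipodal pair $\{\tau,\tau_\Theta^{\rm opp}\}$ and invoking $G$-equivariance together with the symmetry of antipodality (which the paper leaves implicit by treating antipodal pairs as unordered, while you justify it via a Weyl-element swap using $\Theta={\rm i}(\Theta)$). No gap; the extra paragraph on symmetry is just a more explicit rendering of the same step.
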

	
	\begin{proof}
		For $\tau\in C(\tau_\Theta) \cap C(\tau_\Theta^{\rm opp})$, let $u_\tau\in U_\Theta$ be the unique element such that $\tau = u_\tau \tau_\Theta^{\rm opp}$.
		Then, $u_\tau^{-1}\cdot \{\tau,\tau_\Theta^{\rm opp}\} = \{\tau_\Theta^{\rm opp},u_\tau^{-1}\tau_\Theta^{\rm opp}\}$.
		Since $\tau$ and $\tau_\Theta^{\rm opp}$ are antipodal and the action $U_\Theta\acts \F_\Theta$ preserves the property of being antipodal, it follows that $\tau_\Theta^{\rm opp}$ and $u_\tau^{-1}\tau_\Theta^{\rm opp}$ are antipodal.
		Thus, $\iota(\tau) = u_\tau^{-1}\tau_\Theta^{\rm opp}\in C(\tau_\Theta) \cap C(\tau_\Theta^{\rm opp})$.
	\end{proof}
	
	By the \Cref{lem:inversion_preserves}, the map $\iota$ induces a well-defined involution on the set of connected components of $C(\tau_\Theta) \cap C(\tau_\Theta^{\rm opp})$.
	
	\begin{definition}[Inversion property]
		The flag manifold $\F_\Theta$ is said to have {\em Property (I)} if the inversion map $\iota: C(\tau_\Theta) \to C(\tau_\Theta) $ does not leave invariant any connected components of $C(\tau_\Theta) \cap C(\tau_\Theta^{\rm opp})$.
	\end{definition}
	
	We ask the following question:
	
	\begin{question}
		Which flag manifolds $\F_\Theta$ of $G$ have Property (I)?
	\end{question}
	
	The main motivation for the above question is that an affirmative answer would imply strong restrictions on hyperbolic groups admitting $P_\Theta$-Anosov representations into $G$; see \S\ref{sec:AnosocRestriction}.
	In \cite[Thm. 2.4]{dey22}, it is shown that, for all natural numbers $n\not\equiv 0,\pm1$ mod $8$, the full flag manifold $\F_\Delta$ of $G = \SL(n,\R)$ has Property (I).
	In the present paper, we show that $\Sp(2n,\R)/P_\Theta$ has Property (I) if and only if $\Theta$ contains an odd integer; one implication is \Cref{lem:symplectc_propI} and the other follows from \Cref{thm:Anosovrestriction,thm:SL2Cexample}. 
	
	The following result shows that Property (I) is ``increasing,'' which could be helpful to study the question above: 
	Let $\Theta, \Theta'\subset \Delta$ be ${\rm i}$-invariant subsets such that $\Theta'\subset \Theta$.
	The corresponding standard parabolic subgroups
	$P_\Theta$, $P_{\Theta'}$ satisfy $P_\Theta < P_{\Theta'}$ and, therefore, we have a well-defined $G$-equivariant surjective morphism 
	\[
	\pi: \F_{\Theta} \to \F_{\Theta'},
	\]
	whose fiber over any point $\tau'\in P_{\Theta'}$ is isomorphic to $P_{\Theta'}/P_\Theta$.
	
	\begin{proposition}\label{prop:increasing}
		If $\F_{\Theta'}$ has Property (I), then so does $\F_\Theta$.
	\end{proposition}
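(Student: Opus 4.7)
The plan is to argue the contrapositive: assume $\F_\Theta$ fails Property (I) and derive the same for $\F_{\Theta'}$. So pick a connected component $Y$ of $C(\tau_\Theta)\cap C(\tau_\Theta^{\rm opp})$ preserved by $\iota$. Because $\pi$ is continuous and $G$-equivariant, an elementary check using $P_\Theta\subset P_{\Theta'}$ shows $\pi$ carries antipodal pairs in $\F_\Theta$ to antipodal pairs in $\F_{\Theta'}$, so $\pi(Y)$ is a connected subset of $C(\tau_{\Theta'})\cap C(\tau_{\Theta'}^{\rm opp})$ and thus lies in a single component $Y'$. It suffices to show that $\iota'$ preserves $Y'$.

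The core step is to relate $\pi\circ\iota$ with $\iota'\circ\pi$. The inclusion $\Theta'\subset\Theta$ gives $\Sigma^+_{\Theta'}\subset\Sigma^+_\Theta$, and $\mfu_{\Theta'}$ is a Lie ideal of $\mfu_\Theta$. Set
\[
\mfn := \sum_{\alpha\in\Sigma^+_\Theta\setminus\Sigma^+_{\Theta'}} \mfg_\alpha,
\]
which is a subalgebra sitting inside the Levi of $\mfp_{\Theta'}$, since each such root has support in $\Delta\setminus\Theta'$ while meeting $\Theta$. Write $N=\exp(\mfn)$. One obtains a semidirect product decomposition $U_\Theta = U_{\Theta'}\rtimes N$ with $N\subset P_{\Theta'}\cap P_{\Theta'}^{\rm opp}$, so in particular $N$ fixes both $\tau_{\Theta'}$ and $\tau_{\Theta'}^{\rm opp}$.

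Given $\tau\in C(\tau_\Theta)\cap C(\tau_\Theta^{\rm opp})$, factor $u_\tau = vn$ uniquely with $v\in U_{\Theta'}$ and $n\in N$. Since $n$ fixes $\tau_{\Theta'}^{\rm opp}$,
\[
\pi(\tau) = vn\,\tau_{\Theta'}^{\rm opp} = v\,\tau_{\Theta'}^{\rm opp},
\]
so uniqueness of the $U_{\Theta'}$-parametrization of $C(\tau_{\Theta'})$ gives $\iota'(\pi(\tau)) = v^{-1}\tau_{\Theta'}^{\rm opp}$, whereas
\[
\pi(\iota(\tau)) = u_\tau^{-1}\tau_{\Theta'}^{\rm opp} = n^{-1}v^{-1}\tau_{\Theta'}^{\rm opp} = n^{-1}\cdot\iota'(\pi(\tau)).
\]
Because $N$ is connected and stabilizes both $\tau_{\Theta'}$ and $\tau_{\Theta'}^{\rm opp}$, its action preserves $C(\tau_{\Theta'})\cap C(\tau_{\Theta'}^{\rm opp})$ and permutes its connected components trivially, so $\pi(\iota(\tau))$ and $\iota'(\pi(\tau))$ lie in the same component.

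To finish, fix any $\tau\in Y$. Then $\iota(\tau)\in Y$, whence $\pi(\tau),\pi(\iota(\tau))\in Y'$, and by the previous step $\iota'(\pi(\tau))\in Y'$ as well. Since $\iota'$ is a continuous involution preserving $C(\tau_{\Theta'})\cap C(\tau_{\Theta'}^{\rm opp})$ by \Cref{lem:inversion_preserves}, the fact that it sends one point of $Y'$ back into $Y'$ forces $\iota'(Y')=Y'$, contradicting Property (I) for $\F_{\Theta'}$. The main delicate point is isolating the decomposition $U_\Theta = U_{\Theta'}\rtimes N$ with $N$ lying in the opposite parabolic $P_{\Theta'}^{\rm opp}$; once this is in hand, everything reduces to bookkeeping with unipotent parametrizations and the connectedness of $N$.
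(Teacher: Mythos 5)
Your argument is correct, and its overall architecture coincides with the paper's: both push everything down along $\pi\colon \F_\Theta\to\F_{\Theta'}$, observe that $\pi(\iota(\tau))$ and $\iota'(\pi(\tau))$ differ by an element of the stabilizer of $\tau_{\Theta'}^{\rm opp}$ in $U_\Theta$, and then use connectedness of that stabilizer to conclude the two points lie in the same component of $C(\tau_{\Theta'})\cap C(\tau_{\Theta'}^{\rm opp})$. (Your contrapositive framing versus the paper's direct argument is immaterial.) The one genuine difference is how the connectedness of that stabilizer is established. The paper calls it $U_1$, obtains the factorization from the uniqueness of the $U_{\Theta'}$-parametrization alone (writing $u^{-1}\in U_1(u')^{-1}$), and proves path-connectedness of $U_1$ topologically, via the long exact homotopy sequence of the $U_\Theta$-equivariant fibration $C(\tau_\Theta)\to C(\tau_{\Theta'})$. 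You instead identify the stabilizer explicitly as $N=\exp(\mfn)$ with $\mfn=\sum_{\alpha\in\Sigma^+_\Theta\setminus\Sigma^+_{\Theta'}}\mfg_\alpha$, a subalgebra of the Levi of $\mathfrak{p}_{\Theta'}$, so connectedness is immediate; the price is that you must justify the decomposition $U_\Theta=U_{\Theta'}N$ with unique factorization, which rests on $\mfu_{\Theta'}$ being an ideal of $\mfu_\Theta$ complemented by the subalgebra $\mfn$ (your support argument for this is right, and the fact that the exponential of a nilpotent Lie algebra splits along an ideal-plus-complementary-subalgebra decomposition is standard). Your route buys an explicit, purely Lie-theoretic description of the stabilizer and avoids any homotopy theory; the paper's route avoids the structure theory of root spaces and works with nothing beyond the simple transitivity of $U_{\Theta'}$ on $C(\tau_{\Theta'})$. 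Both are complete; note only that your $N$ and the paper's $U_1$ are literally the same group, since $\mfu_\Theta\cap\mfp^{\rm opp}_{\Theta'}=\mfn$.
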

	
	\begin{proof}
		Since $\Theta'\subset \Theta$, we have $\mathfrak{u}_{\Theta'} \subset \mathfrak{u}_\Theta$.
		Therefore, the unipotent radical $U_{\Theta'} = \exp(\mathfrak{u}_{\Theta'})$ of $P_{\Theta'}$ is a subgroup of the unipotent radical $U_{\Theta} = \exp(\mathfrak{u}_{\Theta})$ of $P_{\Theta}$.
		
		We first claim that $ \pi(C(\tau_\Theta)) = C(\tau_{\Theta'}) $:
		Indeed, since $\tau_{\Theta'} = \pi(\tau_\Theta)$ and $\tau_{\Theta'}^{\rm opp} = \pi(\tau_\Theta^{\rm opp})$,
		we get
		\[
		\pi(C(\tau_\Theta)) = \pi(P_\Theta\cdot \tau_\Theta^{\rm opp}) = 
		P_\Theta\cdot\pi( \tau_\Theta^{\rm opp}) \subset P_{\Theta'}\cdot \tau_{\Theta'}^{\rm opp}
		= C(\tau_{\Theta'})
		\]
		and, on the other hand,
		\[
		\pi(C(\tau_\Theta)) = \pi(U_\Theta\cdot \tau_\Theta^{\rm opp}) = 
		U_\Theta\cdot\pi( \tau_\Theta^{\rm opp}) \supset U_{\Theta'}\cdot \tau_{\Theta'}^{\rm opp}
		= C(\tau_{\Theta'}),
		\]
		proving the desired equality.
		In particular,   
		\begin{equation}\label{eqn:prop:increasing}
			\pi(C(\tau_\Theta) \cap C(\tau_\Theta^{\rm opp}) ) 
			\subset C(\tau_{\Theta'}) \cap C(\tau_{\Theta'}^{\rm opp}) 
		\end{equation}
		
		We next claim that $U_1$, the stabilizer of $ \tau_{\Theta'}^{\rm opp}$ in $U_\Theta$, is path-connected:  Indeed, it follows by considering the long exact sequence of homotopy groups corresponding to the $U_{\Theta}$-equivariant fibration 
		\[
		\pi : C(\tau_\Theta) \to C(\tau_{\Theta'})
		\]
		that $\pi_0(F,\tau_{\Theta}^{\rm opp})$ is singleton, where $F = \pi^{-1}(\tau_{\Theta'}^{\rm opp}) = U_1\cdot \tau_{\Theta}^{\rm opp}$.
		Thus, $F$ and, hence, $U_1$ are path-connected.
		Consequently, since $U_1$ is path-connected and stabilizes $\tau_{\Theta'}$ and $\tau_{\Theta'}^{\rm opp}$, it follows that $U_1$ preserves the connected components of $C(\tau_{\Theta'})\cap C(\tau_{\Theta'}^{\rm opp})$.
		
		Now we finish the proof:
		Let $\tau\in C(\tau_{\Theta})\cap C(\tau_{\Theta}^{\rm opp})$ be any point. By definition, 
		$
		\iota(\tau) = u^{-1}\tau_{\Theta}^{\rm opp},
		$
		where $u\in U_\Theta$ is the unique element such that $\tau = u\tau_{\Theta}^{\rm opp}$.
		Moreover, let $\tau' \coloneqq \pi(\tau) = \pi(u\tau_{\Theta}^{\rm opp}) = u\tau_{\Theta'}^{\rm opp}$.
		Let $u'\in U_{\Theta'}$ be the unique element such that $\tau' = u'\tau_{\Theta'}^{\rm opp}$. Then, $u^{-1} u'\in U_{\Theta}$ stabilizes $\tau_{\Theta'}^{\rm opp}$ or, equivalently,
		\begin{equation}\label{eqn:prop:increasing:two}
			u^{-1}\in U_1(u')^{-1}.
		\end{equation}
		Since $\F_{\Theta'}$ has Property (I), $\tau'$ and $\iota(\tau') = (u')^{-1}\tau_{\Theta'}^{\rm opp}$ lie in different connected components of $C(\tau_{\Theta'})\cap C(\tau_{\Theta'}^{\rm opp})$.
		Furthermore,  since $U_1$ is path-connected, it follows that ${\tau'}$ and $U_1\cdot \iota(\tau') = U_1(u')^{-1}\tau_{\Theta'}^{\rm opp}$
		also lie in different connected components of $C(\tau_{\Theta'})\cap C(\tau_{\Theta'}^{\rm opp})$.
		By \eqref{eqn:prop:increasing:two}, $\pi(\iota(\tau)) = u^{-1}\tau_{\Theta'}^{\rm opp}\in U_1(u')^{-1}\tau_{\Theta'}^{\rm opp}$.
		Thus, $\pi(\tau)$ and $\pi(\iota\tau)$ lie in different connected components of $C(\tau_{\Theta'})\cap C(\tau_{\Theta'}^{\rm opp})$.
		By \eqref{eqn:prop:increasing}, it then follows that
		$\tau$ and $\iota(\tau)$ lie in distinct connected components of 
		$C(\tau_\Theta) \cap C(\tau_\Theta^{\rm opp}) $.
		This completes the proof.
	\end{proof}
	
	\begin{corollary}
		If the full flag manifold $\F_\Delta$ of $G$ does not have Property (I), 
		then no other flag manifold $\F_\Theta$ of $G$, where $\Theta\subset\Delta$ is ${\rm i}$-invariant, has Property (I).
	\end{corollary}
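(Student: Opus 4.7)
The plan is to view this corollary as the direct contrapositive of \Cref{prop:increasing}. The proposition establishes a monotonicity: Property (I) propagates from any i-invariant subset to all i-invariant supersets inside $\Delta$. Since $\Delta$ itself is trivially i-invariant (the opposition involution $\mathrm{i}$ permutes $\Delta$) and contains every i-invariant $\Theta\subset \Delta$ as a subset, $\Delta$ is the ``maximal'' element in the poset of i-invariant subsets under inclusion, so it is the hardest candidate for failing Property (I) to propagate to.

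Concretely, I would argue by contradiction. Suppose $\F_\Delta$ does not have Property (I), but there exists some i-invariant $\Theta \subset \Delta$ such that $\F_\Theta$ has Property (I). Applying \Cref{prop:increasing} with $\Theta'$ (in the notation of the proposition) set equal to our $\Theta$, and the ambient larger set set equal to $\Delta$, the hypothesis $\Theta \subset \Delta$ holds automatically, and both sets are i-invariant. The proposition then forces $\F_\Delta$ to have Property (I) as well, contradicting the standing assumption.

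There is no genuine obstacle in this argument; once \Cref{prop:increasing} is in place the corollary is a formal consequence of it, and the only thing to remark on is that $\Delta$ itself qualifies as an admissible choice for the larger set in that proposition.
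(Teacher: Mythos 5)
Your argument is correct and matches the paper's (implicit) reasoning exactly: the corollary is stated without proof precisely because it is the contrapositive of \Cref{prop:increasing} applied with $\Theta'$ equal to the given $\Theta$ and the larger set equal to $\Delta$. Your additional remark that $\Delta$ is trivially ${\rm i}$-invariant and contains every ${\rm i}$-invariant subset is the only point worth checking, and it is handled correctly.
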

	
	For example, it is shown in \cite[\S2.5]{dey22} that Property (I) fails for the full flag manifold of $\SL(n,\R)$, for all $n \equiv \pm1$ mod $8$.
	
	\subsection{Restrictions on Anosov subgroups}\label{sec:AnosocRestriction}
	
	In this subsection, we show that Property (I) restricts the group theoretic structure of Anosov subgroups.
	See \Cref{thm:Anosovrestriction} below for a precise statement.
	
	We recall a few definitions: Let $\Theta\subset \Delta$ be an ${\rm i}$-invariant subset. A subset $\Lambda$ of $\F_\Theta = G/P_\Theta$ is called {\em antipodal} if every pair of distinct points in $\Lambda$ is antipodal.
	An antipodal subset $\Lambda \subset \F_\Theta$ is called {\em maximally antipodal} if for all $\tau \in \F_\Theta$, there exists $\lambda\in\Lambda$ such that $\tau$ is not antipodal to $\lambda$.
	More generally, an antipodal subset $\Lambda \subset \F_\Theta$ is called {\em locally maximally antipodal}
	if there exists a neighborhood $N$ of $\Lambda$ such that, for all $\tau\in N$, there exists $\lambda\in c$ such that $\tau$ and $\lambda$ are not antipodal.
	
	\begin{lemma}\label{lem:propertyIimpliesmaximallyantipodal}
		If $\F_\Theta$ has Property (I), then all antipodal circles in $\F_\Theta$ are locally maximally antipodal.
	\end{lemma}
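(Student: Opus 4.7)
The plan is to reduce to a local analysis near a pair of antipodal points on the circle and then exploit Property~(I). Let $c \subset \F_\Theta$ be an antipodal circle. By the $G$-action I may assume $\tau_\Theta, \tau_\Theta^{\rm opp} \in c$, so that $c \setminus \{\tau_\Theta, \tau_\Theta^{\rm opp}\}$ splits into two open arcs $A_1, A_2$, both contained in $C(\tau_\Theta) \cap C(\tau_\Theta^{\rm opp})$ by antipodality. The central claim to establish is that $A_1$ and $A_2$ lie in distinct connected components $X_1, X_2$ of $C(\tau_\Theta) \cap C(\tau_\Theta^{\rm opp})$.

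I would prove this via a local analysis at $\tau_\Theta^{\rm opp}$. In the chart $U_\Theta \to C(\tau_\Theta)$, $u \mapsto u \tau_\Theta^{\rm opp}$, the inversion $\iota$ becomes the involution $u \mapsto u^{-1}$ on $U_\Theta$, whose derivative at the identity equals $-\id$ on $\mfu_\Theta$. Hence $\iota$ reverses tangent vectors at $\tau_\Theta^{\rm opp}$. Since $A_1, A_2$ approach $\tau_\Theta^{\rm opp}$ from opposite tangent directions of the embedded circle $c$, a germ of $A_1$ near $\tau_\Theta^{\rm opp}$ is mapped by $\iota$ into the local path-component containing a germ of $A_2$. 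Therefore $\iota(X_1) \cap X_2 \neq \emptyset$; since $\iota$ permutes connected components as a homeomorphism and, by Property~(I), fixes none, I conclude $\iota(X_1) = X_2$, and in particular $X_1 \neq X_2$.

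To deduce local maximal antipodality, suppose for contradiction that a sequence $\tau_n$ converges to $\lambda_0 \in c$ with each $\tau_n$ antipodal to every point of $c$. Applying the $G$-action afresh, I arrange $\lambda_0 = \tau_\Theta^{\rm opp}$ and some other point of $c$ to be $\tau_\Theta$. Antipodality of $\tau_n$ with $\lambda_0 = \tau_\Theta^{\rm opp}$ places $\tau_\Theta^{\rm opp}$ inside $C(\tau_n)$, so the connected arc $c \setminus \{\tau_\Theta\}$ lies inside $C(\tau_\Theta) \cap C(\tau_n)$ and hence inside a single connected component $Y_n$ of that open set. Fix points $p_1 \in A_1$, $p_2 \in A_2$ and choose paths $\gamma_n \subset Y_n$ from $p_1$ to $p_2$; passing to a subsequence, the $\gamma_n$ converge (in Hausdorff distance) to a continuum $\gamma \subset \overline{C(\tau_\Theta) \cap C(\tau_\Theta^{\rm opp})}$ joining $p_1$ to $p_2$. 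A perturbation argument, using that the singular point $\tau_\Theta^{\rm opp}$ has codimension $\dim \F_\Theta \geq 2$ in $\F_\Theta$ (the case $\dim \F_\Theta = 1$ reducing to $\F_\Theta \cong \mathbb{RP}^1$, where no $\tau$ can be antipodal to all of $\F_\Theta$), then yields a genuine path inside $C(\tau_\Theta) \cap C(\tau_\Theta^{\rm opp})$ connecting $p_1$ to $p_2$, contradicting $X_1 \neq X_2$.

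The hardest part will be this final limit-and-perturbation argument: one must rigorously control how the closed non-antipodal loci $\F_\Theta \setminus C(\tau_n)$ deform into $\F_\Theta \setminus C(\tau_\Theta^{\rm opp})$, and verify that the limiting continuum $\gamma$ can be perturbed off the singular point $\tau_\Theta^{\rm opp}$ without exiting the open set $C(\tau_\Theta) \cap C(\tau_\Theta^{\rm opp})$. The claim that $A_1, A_2$ are in distinct components is, by contrast, self-contained, following directly from the infinitesimal behavior of $\iota$ at its fixed point $\tau_\Theta^{\rm opp}$ together with Property~(I).
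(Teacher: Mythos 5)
Your strategy is genuinely different from the paper's (which splits the circle into three overlapping arcs and invokes the path argument of \cite[\S 3]{dey22} to show that every point of the component containing an open arc is non-antipodal to some point of that arc), but both of your two main steps have real gaps. For the claim that $A_1$ and $A_2$ lie in distinct components: an antipodal circle is only a topological embedding, so the two arcs need not have well-defined, let alone opposite, tangent directions at $\tau_\Theta^{\rm opp}$. Even for a smooth circle, the fact that $\iota(A_1)$ and $A_2$ approach $\tau_\Theta^{\rm opp}$ along the same direction does not put them in the same local path-component: the relevant open set is the complement of the non-transversality locus $\F_\Theta \setminus C(\tau_\Theta^{\rm opp})$, a union of positive-codimension Schubert varieties through $\tau_\Theta^{\rm opp}$ (generically containing a hypersurface), and two curves with equal tangents at a point of such a set can lie in different local components (compare the curves $t \mapsto (t,t^2)$ and $t\mapsto(t,-t^2)$ relative to $\R^2 \setminus \{y=0\}$). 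So $\iota(X_1)=X_2$ does not follow from the derivative computation.

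The concluding limit-and-perturbation step is the more serious problem. The obstruction to joining $p_1$ to $p_2$ inside $C(\tau_\Theta)\cap C(\tau_\Theta^{\rm opp})$ is not the single point $\tau_\Theta^{\rm opp}$ but the entire non-transversality locus through it, so pushing the limiting continuum off the point does nothing; note also that you may as well take every $\gamma_n$ to be the fixed arc of $c$ through $\tau_\Theta^{\rm opp}$, so the limit gives no new information. Worse, if the proposed perturbation were available it would prove $X_1=X_2$ unconditionally, contradicting your own first step: the only thing preventing the perturbed path from existing is exactly the statement $X_1\neq X_2$ you are trying to contradict, so the argument is circular. The missing ingredient, which the paper imports from \cite{dey22}, is a direct construction: assuming a flag is antipodal to \emph{every} point of an arc joining two antipodal points, one uses the unipotent parametrizations of $C(\tau_\Theta)$ along the arc to build a path from some $\sigma$ to $\iota(\sigma)$ inside a single component of $C(\tau_\Theta)\cap C(\tau_\Theta^{\rm opp})$, contradicting Property (I). That group-theoretic input, rather than a soft topological perturbation, is what converts Property (I) into local maximal antipodality.
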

	
	In the statement above, by an ``antipodal circle'' in $\F_\Theta$, we mean the image of an embedding $\phi: S^1\to\F_\Theta$ such that $\phi(S^1)$ is an antipodal subset of $\F_\Theta$.
	
	\begin{proof}[Proof of Lemma \ref*{lem:propertyIimpliesmaximallyantipodal}]
		Let $\tau_\pm\in\F_\Theta$ be any pair of antipodal points, and let $f:[-1,1] \to \F_\Theta$, $f(\pm 1) = \tau_\pm$, be a continuous map.
		If $f((-1,1)) \subset C(\tau_-)\cap C(\tau_+)$, then, let $\Omega$ be the connected component of  $C(\tau_-)\cap C(\tau_+)$ containing the image $f((-1,1))$.
		Since $\F_\Theta$ has Property (I), by the same argument used in the proof of \cite[Theorem A]{dey22} (see \cite[\S 3]{dey22}), it follows that every point in $\Omega$ is not antipodal to some point in $f(-1,1)$.
		
		If $c$ is an antipodal circle, then, choose any distinct points $\tau_1,\tau_2,\tau_3 \in c$.
		For distinct indices $i,j,k\in\{1,2,3\}$, let $\Omega_{ijk}$ be the connected component of $C(\tau_i)\cap C(\tau_k)$ containing $\tau_j$. Then $N = \Omega_{123} \cup \Omega_{231} \cup \Omega_{312}$ is an open neighborhood of $c$. 
		By the previous paragraph, every point in $N$ is not antipodal to some point in $c$, cf. \cite[Corollary B]{dey22}.
		Hence, $c$ is locally maximally antipodal.
	\end{proof}
	
	A subgroup $\Gamma$ of $G$ is said to be {\em $\Theta$-boundary embedded} if $\Gamma$ is hyperbolic and there exists a $\Gamma$-equivariant continuous map
	\begin{equation}\label{def:BE}
		\xi : \partial_\infty\Gamma \to \F_\Theta
	\end{equation}
	from the Gromov boundary $\partial_\infty\Gamma$ of $\Gamma$ to $\F_\Theta$, which sends every pair of distinct points in $\partial_\infty\Gamma$ to a pair of antipodal points in $\F_\Theta$.
	A non-elementary boundary embedded subgroup is necessarily discrete, since $\xi$ is an embedding and the action of $\Gamma$ on $\partial_\infty \Gamma$ is a convergence group action, see Freden \cite{Freden}. 
	We refer our readers to Kapovich-Leeb-Porti \cite[\S5.2]{KLP17} for more details on $\Theta$-boundary embedded subgroups.
	
	A subgroup $\Gamma$ of $G$ is said to be ($P_\Theta$- or) {\em $\Theta$-Anosov} if it is $\Theta$-boundary embedded with a {\em strongly dynamics preserving} boundary map $\xi : \partial_\infty\Gamma \to \F_\Theta$ (as in \eqref{def:BE}); see \cite[Definition 2.10]{GW12} for a precise definition.
	In this situation, the image of this (unique) strongly dynamics preserving boundary map $\xi : \partial_\infty\Gamma \to \F_\Theta$ is called the {\em $\Theta$-limit set} of $\Gamma$.
	
	\begin{theorem}\label{thm:Anosovrestriction}
		Suppose that $\F_\Theta$ has Property (I).
		Then, any $\Theta$-boundary embedded subgroup of $G$ is virtually isomorphic to either a free group or a surface group.
		
		In particular, the same conclusion holds for any $\Theta$-Anosov subgroup of $G$.
	\end{theorem}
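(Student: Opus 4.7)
The plan is to translate Property (I), by way of the arc case implicit in the proof of \Cref{lem:propertyIimpliesmaximallyantipodal}, into a sharp topological constraint on the limit set $\Lambda := \xi(\partial_\infty \Gamma) \subset \F_\Theta$, and then to invoke convergence group theory to identify $\Gamma$ up to virtual isomorphism. My first observation would be that since $\xi$ sends distinct points to antipodal (hence distinct) points and $\partial_\infty \Gamma$ is a compactum, $\xi$ is a $\Gamma$-equivariant embedding, so $\partial_\infty \Gamma \cong \Lambda$ as topological $\Gamma$-spaces. If $\Gamma$ is elementary it is virtually cyclic, hence already virtually free, so I would assume $\Gamma$ is non-elementary.

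The central step is to promote the ``no invariant component'' content of Property (I) into a ``no branching'' statement for $\Lambda$. Given any continuous arc $f \colon [-1,1] \to \Lambda$ with antipodal endpoints $\lambda_\pm = f(\pm 1)$, the interior $f((-1,1))$ is contained in a single connected component $\Omega$ of $C(\lambda_-) \cap C(\lambda_+)$, and the argument in the proof of \Cref{lem:propertyIimpliesmaximallyantipodal} shows that every point of $\Omega$ fails to be antipodal to some point of $f([-1,1])$. Since all points of $\Lambda$ are pairwise antipodal, this forces $\Omega \cap \Lambda \subset f([-1,1])$, i.e., a neighborhood in $\Lambda$ of each interior point of any arc is contained in that arc. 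This local arc-isolation is the main output of Property (I) I would exploit.

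Next I would split on the topology of $\partial_\infty \Gamma$. If $\partial_\infty \Gamma$ is totally disconnected, it is a Cantor set (non-elementary boundaries are perfect), so $\Gamma$ has infinitely many ends; Stallings' splitting theorem and Dunwoody's accessibility present $\Gamma$ as the fundamental group of a finite graph of groups with finite edge groups and vertex groups that are finite or one-ended, and each one-ended vertex group would contribute a non-degenerate connected subset of $\partial_\infty \Gamma$, contradicting total disconnection; so all vertex groups are finite and $\Gamma$ is virtually free. Otherwise $\partial_\infty \Gamma$ admits a non-degenerate connected component $C$. Using local connectivity of hyperbolic group boundaries (Bestvina--Mess, Swenson) together with the local arc-isolation above, $C$ is a locally arcwise-connected compactum with no branch points, hence a topological $1$-manifold, and compactness forces $C \cong S^1$. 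Minimality of the non-elementary action $\Gamma \acts \partial_\infty \Gamma$ then forces $\partial_\infty \Gamma = C \cong S^1$, and the Tukia--Gabai--Casson--Jungreis convergence group theorem identifies $\Gamma$ as virtually a closed surface group.

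The hardest part will be making the ``$C$ is a $1$-manifold'' step fully rigorous, in particular ruling out dendrite-like or Sierpinski/Menger-type components. The strategy will be: at any alleged branch point $p \in C$, one finds three arcs in $C$ meeting only at $p$; choosing antipodal endpoints on two of them and using the third to exhibit a point of $\Lambda$ in $\Omega \setminus f([-1,1])$ directly contradicts the arc-isolation above. A minor subtlety is that such arcs must genuinely exist in the relevant connected components, which is ensured by local connectivity of hyperbolic group boundaries once $C$ is known to be non-degenerate.
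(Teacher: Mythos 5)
Your use of Property (I) is extracted correctly and coincides with the paper's: the component $\Omega$ of $C(\lambda_-)\cap C(\lambda_+)$ containing the interior of an arc in $\Lambda$ meets $\Lambda$ only in that arc, which is precisely the mechanism behind \Cref{lem:propertyIimpliesmaximallyantipodal}. But your overall route then diverges from the paper's, and the divergence is where the gaps lie. The paper does not classify components of $\partial_\infty\Gamma$ from scratch: assuming $\Gamma$ is not virtually free, it invokes Bonk--Kleiner \cite{MR2146190} to produce an embedded circle $j(S^1)\subset\partial_\infty\Gamma$, uses \Cref{lem:propertyIimpliesmaximallyantipodal} to conclude that $\xi(j(S^1))$ has a neighborhood in the flag manifold meeting $\Lambda$ only in $\xi(j(S^1))$ --- so that $j(S^1)$ is clopen in $\partial_\infty\Gamma$ --- and then uses the north--south dynamics of a single infinite-order element with attracting fixed point in $j(S^1)$ to force $j(S^1)=\partial_\infty\Gamma$, after which Tukia--Gabai--Casson--Jungreis applies. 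This sidesteps the entire continuum-theoretic analysis you propose.

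The concrete gaps in your version are the following. First, ``locally arcwise-connected compactum with no branch points, hence a topological $1$-manifold'' is not a valid implication: ruling out points of Menger--Urysohn order $\ge 3$ still permits order-$1$ points, so the correct conclusion from continuum theory is that $C$ is an arc or a simple closed curve, and the arc must be excluded separately (for instance because the stabilizer of $C$ acts on $C$ as a non-elementary convergence group with no finite orbit, while an arc has a distinguished two-point set of endpoints). Second, local connectivity is a theorem about \emph{connected} boundaries (Bestvina--Mess together with the Bowditch--Swarup cut-point theorem); to apply it to a component $C$ of a possibly disconnected boundary you must first identify $C$ with the boundary of a one-ended, quasiconvex vertex group of a Dunwoody decomposition, a step you elide. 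Third, ``minimality forces $\partial_\infty\Gamma=C$'' is incomplete: minimality only kills proper closed invariant subsets, so you need $C$ to be open (this does follow from your arc-isolation, but you never say so), and even then you only obtain that $\partial_\infty\Gamma$ is a finite disjoint union of circles; excluding two or more components requires a further argument (e.g.\ a non-elementary hyperbolic group with disconnected boundary has infinitely many boundary components, or the paper's dynamical argument with a loxodromic element fixing a point of $C$). Each of these can be repaired, but as written the surface-group case of your proof does not close; the cleanest fix is to replace the classification step by the Bonk--Kleiner circle together with the clopen-plus-dynamics argument, which is exactly what the paper does.
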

	
	\begin{proof}
		Let $\Gamma < G$ be a $\Theta$-boundary embedded subgroup and let $\xi : \partial_\infty \Gamma \to \F_\Theta$ be a boundary embedding as in \eqref{def:BE}. Assume that $\Gamma$  not virtually free.
		Then, using \cite[Corollary 2]{MR2146190}, we discover an embedding $j :S^1 \to \partial_\infty\Gamma$.  We aim to demonstrate that $j$ is surjective, and therefore, a homeomorphism:
		
		We observe that $\xi \circ j(S^1)\subset  \F_\Theta$ is an antipodal circle. By \Cref{lem:propertyIimpliesmaximallyantipodal}, it is locally maximally antipodal. Since $\xi : \partial_\infty \Gamma \to \F_\Theta$ maps pairwise distinct points to pairwise antipodal points, there exists an open neighborhood $N$ of $\xi (  j(S^1))$ in $\F_\Theta$ which does not contain $\xi(z)$, for any $z\in \partial_\infty \Gamma\setminus j(S^1)$.
		Consequently, $j(S^1) = \xi^{-1}(N)$ is open (and closed) in $\partial_\infty\Gamma$.
		Since hyperbolic fixed points are dense in $\partial_\infty\Gamma$, there exists an infinite order element $\gamma\in\Gamma$ such that the attractive fixed point $\gamma_+$ of $\gamma$ lies in $j(S^1)$. 
		Then, we must have $\gamma(j(S^1)) = j(S^1)$ since $\gamma(j(S^1))\cap j(S^1)$ is nonempty and $j(S^1)$ is both closed and open in $\partial_\infty\Gamma$.
		
		Notably, the repulsive fixed point $\gamma_-\in\partial_\infty\Gamma$ of $\gamma$ also lies in $j(S^1)$.  This conclusion arises because  $\gamma_-$ is the accumulation point of the sequence $(\gamma^{-n} z)$, where $z\in j(S^1)$ is any point distinct from of $\gamma_+$, and $(\gamma^{-n} z)$ remains within $j(S^1)$. 
		
		Now we can show that $j: S^1 \to \partial_\infty \Gamma$ is surjective. As $\gamma$ preserves $j(S^1)$, it also preserves its complement $\partial_\infty\Gamma \setminus j(S^1)$.
		Were $\partial_\infty\Gamma \setminus j(S^1)$ nonempty, then, for any point $z\in \partial_\infty\Gamma \setminus j(S^1)$, the sequence $(\gamma^n z)$ would accumulate at $\gamma_+$ (note that $z\ne \gamma_-$ as $\gamma_-\in j(S^1)$).
		However, since the sequence $(\gamma^n z)$ lies the closed subset $\partial_\infty\Gamma \setminus j(S^1)$, it cannot have an accumulation point outside it, leading to a contradiction! 
		
		Therefore, we have arrived at the conclusion that $\partial_\infty \Gamma$ is homeomorphic to a circle.
		Applying the deep results of Tukia \cite{Tukia}, Gabai \cite{Gabai}, Freden \cite{Freden}, Casson-Jungreis \cite{CJ} (see also \cite[Theorem 5.4]{BK}), it follows that $\Gamma$ contains a finite index subgroup isomorphic to the fundamental group of a closed hyperbolic surface.
	\end{proof}
	
	Our next result shows that $\Theta$-Anosov subgroups of $G$ with maximally antipodal $\Theta$-limit sets are {\em maximal} in the class of $\Theta$-Anosov subgroups of $G$ in the sense that they cannot be realized as infinite index subgroups of larger $\Theta$-Anosov subgroups.
	
	\begin{proposition}\label{prop:maximal_anosov}
		Let $\Gamma$ be a residually finite $\Theta$-Anosov subgroup of $G$. The following are equivalent:
		{
			\renewcommand{\theenumi}{(\roman{enumi})}
			\begin{enumerate}
				\item The $\Theta$-limit set $\Lambda\subset\F_\Theta$ of $\Gamma$ is locally maximally antipodal.
				\item The $\Theta$-limit set $\Lambda\subset\F_\Theta$ of $\Gamma$ is maximally antipodal.
				\item If $\Gamma'$ is a $\Theta$-Anosov subgroup of $G$ such that $[\Gamma: (\Gamma\cap \Gamma')] < \infty$, then  $\Gamma$ is commensurable with $\Gamma'$.
			\end{enumerate}
		}
	\end{proposition}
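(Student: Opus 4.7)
The plan is to prove the cycle $(\mathrm{ii}) \Rightarrow (\mathrm{i}) \Rightarrow (\mathrm{ii}) \Rightarrow (\mathrm{iii}) \Rightarrow (\mathrm{ii})$, the last leg being the delicate one. The direction $(\mathrm{ii})\Rightarrow(\mathrm{i})$ is immediate with $N = \F_\Theta$. For $(\mathrm{i})\Rightarrow(\mathrm{ii})$, let $N$ witness local maximal antipodality and suppose $\tau \in \F_\Theta$ is antipodal to every $\lambda \in \Lambda$. Picking any hyperbolic $\gamma \in \Gamma$ with fixed points $\gamma_\pm \in \partial_\infty \Gamma$, the antipodality of $\tau$ to $\xi(\gamma_-) \in \Lambda$ together with the strongly dynamics preserving property forces $\gamma^n \tau \to \xi(\gamma_+) \in \Lambda$. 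For large $n$, $\gamma^n\tau \in N$, so there exists $\lambda_n \in \Lambda$ not antipodal to $\gamma^n \tau$; by $\Gamma$-invariance of $\Lambda$ and of antipodality, $\gamma^{-n}\lambda_n \in \Lambda$ is not antipodal to $\tau$, a contradiction.

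For $(\mathrm{ii}) \Rightarrow (\mathrm{iii})$, set $H \coloneqq \Gamma \cap \Gamma'$. As a finite-index subgroup of $\Gamma$, $H$ is itself $\Theta$-Anosov with $\Lambda(H) = \Lambda$, and it also sits inside the hyperbolic group $\Gamma'$. Since both $H \to \X$ and $\Gamma' \to \X$ are quasi-isometric embeddings (by the Anosov hypothesis), the inclusion $H \hookrightarrow \Gamma'$ is a quasi-isometric embedding, hence $H$ is quasi-convex in $\Gamma'$. Its limit set in $\partial_\infty \Gamma'$ corresponds under the antipodal embedding $\xi_{\Gamma'}$ to $\Lambda(H) = \Lambda \subset \Lambda(\Gamma')$; since $\Lambda(\Gamma')$ is antipodal and $\Lambda$ is maximal by $(\mathrm{ii})$, we must have $\Lambda(\Gamma') = \Lambda$, so $H$ has full limit set in $\partial_\infty\Gamma'$. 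By the standard fact (Swenson, Kapovich--Leeb--Porti) that a quasi-convex subgroup of a hyperbolic group with full limit set is of finite index, $[\Gamma':H] < \infty$, and $\Gamma, \Gamma'$ are commensurable.

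For $(\mathrm{iii}) \Rightarrow (\mathrm{ii})$, I would argue by contraposition. Assume $\Lambda$ is not maximally antipodal and fix $\tau \in \F_\Theta \setminus \Lambda$ antipodal to all of $\Lambda$. The plan is to build a Schottky element $g \in G$ whose attracting flag on $\F_\Theta$ lies near $\tau$ and whose repelling flag is a suitable antipodal partner of $\tau$ (obtained via the inversion involution from the previous subsection), with both chosen far from $\Lambda$. Residual finiteness of $\Gamma$ allows passing to a torsion-free finite-index subgroup so that a ping-pong argument applies cleanly between $g^{\pm 1}$ and a generating set for $\Gamma$, and a combination theorem for Anosov subgroups (in the spirit of Dey--Kapovich--Leeb) then yields a $\Theta$-Anosov supergroup $\Gamma' \supset \Gamma$ of infinite index, contradicting $(\mathrm{iii})$.

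The main obstacle is the last direction: producing an appropriate Schottky element $g$ with compatible dynamics and verifying the hypotheses of an Anosov combination theorem requires care, and this is precisely the step where residual finiteness enters the argument. The first two implications, by contrast, reduce to well-known dynamical and quasi-convex facts once the antipodal embedding framework is in hand.
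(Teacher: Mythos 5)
Your proposal follows essentially the same route as the paper for all three implications: the same dynamical argument for (i) $\Rightarrow$ (ii), the same ``finite-index intersection has the same limit set, and a proper quasi-convex subgroup would force $\Lambda(\Gamma')$ to properly contain the maximally antipodal set $\Lambda$'' argument for (ii) $\Rightarrow$ (iii), and the same combination-theorem strategy for (iii) $\Rightarrow$ (ii). The one place you wobble is precisely the step you flag as the main obstacle, and there the paper's resolution is simpler than what you sketch: no hand-built Schottky element, no ping-pong, and no role for the inversion involution (which concerns components of $C(\tau_\Theta)\cap C(\tau_\Theta^{\rm opp})$ and is irrelevant here). Since the set of flags antipodal to every point of $\Lambda$ is open (compactness of $\Lambda$ plus closedness of the non-antipodality relation) and each $C(\tau)$ is open and dense, one picks $\tau_\pm$ with $\Lambda\cup\{\tau_\pm\}$ antipodal and lets ${\rm H}$ be any cyclic $\Theta$-Anosov subgroup with limit set exactly $\{\tau_\pm\}$; then \cite[Theorem 1.3]{DKL19} applied to $\Gamma$ and ${\rm H}$ yields finite-index subgroups $\Gamma_1\le\Gamma$ and ${\rm H}_1\le{\rm H}$ with $\Gamma'=\langle\Gamma_1,{\rm H}_1\rangle\cong\Gamma_1\star{\rm H}_1$ Anosov, so $[\Gamma:\Gamma\cap\Gamma']\le[\Gamma:\Gamma_1]<\infty$ while $\Gamma_1$ has infinite index in $\Gamma'$, contradicting (iii). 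Two small corrections to your sketch: residual finiteness enters only as a hypothesis of that combination theorem (it does not give torsion-freeness, nor is it used to set up ping-pong directly), and the resulting $\Gamma'$ contains only $\Gamma_1$, not $\Gamma$, so ``supergroup $\Gamma'\supset\Gamma$ of infinite index'' should be replaced by the commensurability statement actually needed to contradict (iii).
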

	
	We remark that the residual finiteness assumption is only needed to show (iii) implies (ii).
	
	\begin{proof}[Proof of Proposition \ref*{prop:maximal_anosov}]
		Clearly, (ii) implies (i). We show (i) implies (ii): Suppose that the $\Theta$-limit set $\Lambda\subset\F_\Theta$ of $\Gamma$ is locally maximally antipodal.
		Let $N$ be an open neighborhood of $\Lambda$ in $\F_\Theta$ such that for all $\tau\in N$, there exists $\lambda\in\Lambda$ such that $\tau$ is not antipodal to $\lambda$.
		If, on the contrary, $\Lambda$ is not maximally antipodal, there exists $\tau\in \F_\Theta$ that is antipodal to any point in $\Lambda.$ Let $\gamma\in\Gamma$ be an element with infinite order, and $\gamma_+$ denote the attractive fixed point of $\gamma$ in $\partial_\infty\Gamma.$ Since $\Lambda$ is the image of a $\Gamma$-equivariant, strongly dynamics-preserving boundary map from $\partial_\infty \Gamma$ to $\F_\Theta,$ the sequence $(\gamma^n \tau)$ accumulates at $\xi(\gamma_+) \in \Lambda \subset N.$
		However, as $\gamma^n \tau,$ for $n\in\mathbb{N},$ is antipodal to any point in $\Lambda,$ we can conclude that $\gamma^n\tau\not\in N$; this leads to a contradiction with the preceding sentence.
		
		Now, we show that (ii) implies (iii): Suppose that the $\Theta$-limit set $\Lambda$ of $\Gamma$ is maximally antipodal in $\F_\Theta$. If $\Gamma'$ is a $\Theta$-Anosov subgroup of $G$ such that  ${\rm H} = \Gamma\cap \Gamma'$ is a finite index subgroup of $\Gamma$, then ${\rm H}$ is also a $\Theta$-Anosov subgroup of $G$ with the same $\Theta$-limit set $\Lambda$. We proceed to show that ${\rm H}$ is a finite index subgroup of $\Gamma'$. As $\Theta$-Anosov subgroups are quasi-isometrically embedded in $G$, the inclusion ${\rm H}\hookrightarrow\Gamma'$ is a quasi-isometric embedding, which gives rise to a natural embedding $j: \partial_\infty{\rm H}\hookrightarrow \partial_\infty\Gamma'$. Moreover, the ${\rm H}$-equivariant map $\xi: \partial_\infty {\rm H} \to \Lambda$ is realized as the composition $\xi = \xi' \circ j$, where $\xi': \partial_\infty \Gamma' \to \F_\Theta$ is the strongly dynamics-preserving boundary map for $\Gamma'$. If $[\Gamma':{\rm H}] = \infty$, then $j: \partial_\infty{\rm H}\hookrightarrow \partial_\infty\Gamma'$ is not surjective, and therefore, $\Lambda = \xi' \circ j(\partial_\infty{\rm H})$ is a proper subset of the $\Theta$-limit set $\xi'(\partial_\infty\Gamma')$ of $\Gamma'$. Thus, since $\Lambda$ is maximally antipodal, $\xi'(\partial_\infty\Gamma')$ cannot be an antipodal subset of $\F_\Theta$. As $\Theta$-limit sets of $\Theta$-Anosov subgroups of $G$ are antipodal subsets of $\F_\Theta$, this leads to a contradiction.
		
		Finally, we show (iii) implies (ii): Suppose that the limit set $\Lambda$ in $\F_\Theta$ of $\Gamma$ is not maximally antipodal. Then, we can pick a pair of distinct points $\tau_\pm \in \F_\Theta \setminus\Lambda$ such that $\Lambda\cup \{\tau_\pm\}$ is antipodal.
		Let ${\rm H}$ be a cyclic $\Theta$-Anosov subgroup of $G$ with limit set $\{\tau_\pm\}$.
		Since $\Gamma$ and ${\rm H}$ are  residually finite, we can apply the Combination Theorem for Anosov subgroups \cite[Theorem 1.3]{DKL19} to obtain finite index subgroups $\Gamma_1 $ of $ \Gamma$ and ${\rm H}_1$ of ${\rm H}$ such that the subgroup $\Gamma'$ in $G$ generated by $\Gamma_1 $ and ${\rm H}_1$ is $\Theta$-Anosov and $\Gamma'$ is naturally isomorphic to the free product $\Gamma_1 \star {\rm H}_1$. Since $\Gamma_1 \subset \Gamma \cap \Gamma'$, it follows that $[\Gamma : (\Gamma\cap\Gamma')] \le [\Gamma : \Gamma_1] < \infty$. But $\Gamma_1$ is an infinite index subgroup of $\Gamma'$. Therefore, $\Gamma$ is not commensurable with $\Gamma'$.
	\end{proof}
	
	Following Kapovich-Leeb-Porti, the {\rm flag limit set} can be defined more generally for an arbitrary subgroup $G'<G$ \cite[Definition 4.25]{KLP17}; this agrees with the previous definition when $G'$ is a $\Theta$-Anosov subgroup. 
	The examples of Anosov subgroups we produce in \S\ref{sec:examples} arise implicitly as convex cocompact subgroups of rank $1$ subgroups $G'<G$. 
	For a uniform lattice $\Gamma$ in a rank 1 subgroup $G'$ inside $G$, the $\Theta$-limit set of $\Gamma$ equals the $\Theta$-limit set of $G'$.
	Therefore, our results in \S\ref{sec:sl2c maximally antipodal} and \S\ref{sec:complex hyperbolic space} are described in terms of flag limit sets of rank $1$ subgroups.
	
	\begin{remark}
		Guichard-Wienhard  \cite{GW18,GW22} have introduced the interesting notion of $\Theta$-positive representations of surface groups.
		Such representations are $\Theta$-Anosov, and triples in their $\Theta$-limit sets lie in components of pairwise transverse triples of flags which are called {\em $\Theta$-positive}.
		By \cite[Proposition 2.5(3)]{GLW21}, $\Theta$-positive triples are sent to another component of pairwise transverse triples under the inversion map $\iota$, see \Cref{def:inversion map}.
		When the inversion map does not leave a component of $C(\tau_\Theta) \cap C(\tau_\Theta^{\rm opp})$ invariant, arcs in that component (with endpoints at $\tau_\Theta,\tau_\Theta^{\rm opp}$) are locally maximally antipodal, again by the proof of \cite[Theorem A]{dey22}.
		It follows that limit sets of $\Theta$-positive representations are maximally antipodal.
		Hitchin representations and maximal representations are special cases of $\Theta$-positive representations. 
		These provide examples of maximally antipodal circles in $\Iso_n(\R^{2n},\omega)$. We record this observation in the following result.
	\end{remark}
	
	\begin{corollary}
		Let $\rho: \Gamma \to G$ be a $\Theta$-positive representation, where $\Gamma$ is a surface group. Then, the $\Theta$-limit set of $\rho(\Gamma)$ is a maximally antipodal subset of $\F_\Theta$. 
		
		In particular, if $\Gamma'$ is a $\Theta$-Anosov subgroup of $G$ such that $[\rho(\Gamma): (\rho(\Gamma)\cap \Gamma')] < \infty$, then  $\rho(\Gamma)$ is commensurable with $\Gamma'$.
	\end{corollary}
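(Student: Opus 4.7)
The plan is to assemble the corollary directly from the remark preceding it together with \Cref{prop:maximal_anosov}. Since surface groups are residually finite, $\rho(\Gamma)$ is residually finite, so \Cref{prop:maximal_anosov} applies to $\rho(\Gamma)$; hence once I show that the $\Theta$-limit set $\Lambda = \xi(\partial_\infty \Gamma)$ is (locally) maximally antipodal, both conclusions follow: maximal antipodality from implication (i)$\Rightarrow$(ii) and commensurability from (ii)$\Rightarrow$(iii). Everything is therefore reduced to the first sentence of the corollary.

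To show $\Lambda$ is locally maximally antipodal, I would imitate the proof of \Cref{lem:propertyIimpliesmaximallyantipodal}. Since $\Gamma$ is a surface group, $\partial_\infty\Gamma \cong S^1$ and so $\Lambda$ is a topological circle in $\F_\Theta$. Pick any three distinct points $\tau_1,\tau_2,\tau_3 \in \Lambda$, ordered compatibly with the cyclic order on $S^1$. For distinct $i,j,k \in \{1,2,3\}$, let $\Omega_{ijk}$ be the connected component of $C(\tau_i) \cap C(\tau_k)$ containing $\tau_j$; this component contains the arc in $\Lambda$ from $\tau_i$ to $\tau_k$ passing through $\tau_j$, and by $\Theta$-positivity of $\rho$ together with \cite[Proposition 2.5(3)]{GLW21}, the inversion map $\iota$ of \Cref{def:inversion map} (based at the pair $\tau_i,\tau_k$) sends $\Omega_{ijk}$ to a \emph{different} connected component of $C(\tau_i) \cap C(\tau_k)$. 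As indicated in the remark, the argument in the proof of \cite[Theorem A]{dey22} then shows that every arc inside $\Omega_{ijk}$ with endpoints $\tau_i,\tau_k$ is locally maximally antipodal. Setting $N = \Omega_{123} \cup \Omega_{231} \cup \Omega_{312}$ gives an open neighborhood of $\Lambda$, and every point of $N$ fails to be antipodal to some point of $\Lambda$, exactly as in the end of the proof of \Cref{lem:propertyIimpliesmaximallyantipodal}.

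With $\Lambda$ locally maximally antipodal in hand, \Cref{prop:maximal_anosov}(i)$\Rightarrow$(ii) gives that $\Lambda$ is in fact maximally antipodal, which is the first statement of the corollary. The ``in particular'' statement is then immediate from \Cref{prop:maximal_anosov}(ii)$\Rightarrow$(iii). The only subtle point in the above plan is verifying that the local-maximal-antipodality argument of \cite{dey22} really does apply verbatim to an arc in an $\iota$-noninvariant component of $C(\tau_\Theta)\cap C(\tau_\Theta^{\mathrm{opp}})$; but this is precisely the content of the remark, which I would cite directly rather than reprove.
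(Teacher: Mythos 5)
Your proposal is correct and follows essentially the same route as the paper: the paper offers no separate proof of this corollary, since the preceding remark (positivity plus \cite[Proposition 2.5(3)]{GLW21} implies the component containing the limit arcs is not $\iota$-invariant, hence the arcs are locally maximally antipodal by the argument of \cite[Theorem A]{dey22}) together with \Cref{prop:maximal_anosov}(i)$\Rightarrow$(ii)$\Rightarrow$(iii) is exactly the intended argument. Your only superfluous step is invoking residual finiteness of $\rho(\Gamma)$, which the paper notes is needed only for the unused implication (iii)$\Rightarrow$(ii).
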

	
	\section{Anosov subgroups of the symplectic group} \label{sec:res_symplectic}
	
	Let $J \colon \R^{2n} \to \R^{2n}$ be the linear map defined by $Je_i = (-1)^i e_{2n-i+1}$ on the standard basis. Then $\omega(x,y)=x^TJy$ defines a symplectic form and the symplectic group is given by
	$$ \Sp(2n,\R) = \{ g \in {\rm GL}(2n,\mathbb{R}) :\ g^TJg=J\} .$$
	Observe that $J^2=-1,J=-J^T$, and $g \in \Sp(2n,\R)$ if and only if $-Jg^TJ=g^{-1}$. 
	
	\subsection{Restrictions on Anosov subgroups of the symplectic group}\label{sec:restrictionOnAnosov}
	
	The key restriction on Anosov subgroups comes from analyzing how the antiprincipal minors transform under inversion.
	
	\begin{definition}
		Define the \textit{antiprincipal $k \times k$ minor} of $g$ to be $p_k(g)$ where 
		$$ g e_{2n} \wedge g e_{2n-1} \wedge \cdots \wedge g e_{2n-k+1} \wedge e_{k+1} \wedge \cdots \wedge e_{2n} = p_k(g) e_1 \wedge e_2 \wedge \cdots \wedge e_{2n}. $$
	\end{definition}
	
	\begin{lemma}[Key Lemma]\label{lem:detCondition}
		For $g \in \Sp(2n,\R)$, $p_k(g^{-1}) = (-1)^k p_k(g)$.
	\end{lemma}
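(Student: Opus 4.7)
The plan is to exploit the identity $g^{-1} = -J g^T J$, which follows from $g^T J g = J$ together with $J^{-1} = -J$, to re-express the vectors $g^{-1} e_{2n-j+1}$ in terms of $g^T$ and then track signs carefully when substituting into the defining wedge product for $p_k(g^{-1})$.

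The first step is to compute $g^{-1} e_{2n-j+1}$ using this identity. Since $J e_{2n-j+1} = (-1)^{2n-j+1} e_j = (-1)^{j+1} e_j$, a direct substitution yields $g^{-1} e_{2n-j+1} = (-1)^{j}\, J g^T e_j$. Wedging these together for $j = 1, \dots, k$ and pulling the scalars outside gives
\[
g^{-1} e_{2n} \wedge \cdots \wedge g^{-1} e_{2n-k+1} \;=\; (-1)^{k(k+1)/2}\, J g^T e_1 \wedge \cdots \wedge J g^T e_k.
\]

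The second step is to compute the coefficient of $e_1 \wedge \cdots \wedge e_{2n}$ in the full wedge product $J g^T e_1 \wedge \cdots \wedge J g^T e_k \wedge e_{k+1} \wedge \cdots \wedge e_{2n}$. Since the trailing factors $e_{k+1} \wedge \cdots \wedge e_{2n}$ absorb the components along $\mathrm{span}(e_{k+1}, \dots, e_{2n})$, this coefficient equals the determinant of the $k \times k$ matrix whose $(m,j)$-entry is the $e_m$-component of $J g^T e_j$. Using the dual computation $e_m^T J = (-1)^{m+1}\, e_{2n-m+1}^T$, this entry equals $(-1)^{m+1} g_{j,\, 2n-m+1}$. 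Factoring the row-signs $(-1)^{m+1}$ out and transposing, the remaining $k \times k$ determinant is precisely the one defining $p_k(g)$, so the total coefficient is $(-1)^{k(k+1)/2 + k}\, p_k(g)$.

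Finally, combining the two sign contributions yields
\[
p_k(g^{-1}) \;=\; (-1)^{k(k+1)/2}\, (-1)^{k(k+1)/2 + k}\, p_k(g) \;=\; (-1)^{k(k+1) + k}\, p_k(g) \;=\; (-1)^k\, p_k(g),
\]
using that $k(k+1)$ is always even. The argument is purely mechanical once the identity $g^{-1} = -J g^T J$ has been invoked; there is no conceptual obstacle, and the main task is simply the careful bookkeeping of signs coming from reordering the wedge factors, from the explicit action of $J$ on the standard basis, and from the transposition at the end. The single place where it is easy to err is collecting the parity of $\sum_{m=1}^{k}(m+1) = k(k+1)/2 + k$ and combining it with the earlier $k(k+1)/2$, so I would keep these two contributions visibly separate until the final line.
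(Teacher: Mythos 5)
Your proof is correct and follows essentially the same route as the paper: both start from the identity $g^{-1}=-Jg^TJ$ and reduce the claim to careful sign bookkeeping in the defining wedge product, differing only in that the paper applies $J$ to the entire $2n$-fold wedge (using $\det J=1$) while you extract the relevant matrix entries directly. All of your sign computations check out.
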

	
	We  use the notation $g[I,J]$ to denote the submatrix of $g$ formed by the rows $I$ and columns $J$; in particular, $p_k(g)= \det(g[\{1,\dots,k\},\{2n,\dots,2n-k+1\}])$.
	
	\begin{proof}
		We apply the definition:
		{
			\allowdisplaybreaks
			\begin{align*}
				p_k(g^{-1}) e_1 &\wedge e_2 \wedge \cdots \wedge e_{2n}\\
				& = g^{-1} e_{2n} \wedge g^{-1} e_{2n-1} \wedge \cdots \wedge g^{-1} e_{2n-k+1} \wedge e_{k+1} \wedge \cdots \wedge e_{2n} \\
				& = (-Jg^TJ e_{2n}) \wedge (-Jg^TJ e_{2n-1}) \wedge \cdots \wedge \\
				&\phantom{=}\hspace{1.5in} 
				(-Jg^TJ e_{2n-k+1}) \wedge e_{k+1} \wedge \cdots \wedge e_{2n} \\
				& = (g^TJ e_{2n}) \wedge (g^TJ e_{2n-1}) \wedge \cdots \wedge (g^TJ e_{2n-k+1}) \wedge Je_{k+1} \wedge \cdots \wedge Je_{2n} \\
				\intertext{To find the overall sign of applying $J$ we note that each of $e_{2n-k+1},\dots,e_{2n}$ appear twice. So it suffices to compute $(k+1)+\dots+(2n-k)= 2n^2-2nk+n-k\equiv n+k \mod 2$. We continue:}   
				& = (-1)^{n+k}(g^T e_{1}) \wedge (g^T e_{2}) \wedge \cdots \wedge (g^T e_{k}) \wedge e_{2n-k} \wedge \cdots \wedge e_{1} \\
				& = (-1)^{n+k} \det(g^T[\{2n,\dots,2n-k+1\},\{1,\dots,k\}])\\*
				&\phantom{=}\hspace{1.5in} e_{2n} \wedge e_{2n-1} \wedge \cdots \wedge e_{2n-k+1} \wedge e_{2n-k} \wedge \cdots \wedge e_{1} \\
				& = (-1)^{n+k} \det(g[\{1,\dots,k\},\{2n,\dots,2n-k+1\}])\\*
				&\phantom{=}\hspace{1.5in} e_{2n} \wedge e_{2n-1} \wedge \cdots \wedge e_{2n-k+1} \wedge e_{2n-k} \wedge \cdots \wedge e_{1} \\  
				& = (-1)^{k} \det(g[\{1,\dots,k\},\{2n,\dots,2n-k+1\}])\\*
				&\phantom{=}\hspace{1.5in} e_{1} \wedge e_{2} \wedge \cdots \wedge e_{k} \wedge e_{k+1} \wedge \cdots \wedge e_{2n} \\ 
				& = (-1)^{k}(g e_{2n}) \wedge (g e_{2n-1}) \wedge \cdots \wedge (g e_{2n-k+1}) \wedge e_{k+1} \wedge \cdots \wedge e_{2n} \\
				& = (-1)^{k} p_k(g) e_1 \wedge e_2 \wedge \cdots \wedge e_{2n}
			\end{align*}
		}
		which completes the proof.
	\end{proof}
	
	The flag manifolds defined in \S\ref{sec:FlagManifolds} have the following concrete description for $G = \Sp(2n,\R)$:
	Let us fix a subset $\Theta \subset \{1,\dots,n\}$. 
	An \textit{isotropic $\Theta$-flag} is a partial flag $V^{k_1} \subset \cdots \subset V^{k_\abs{\Theta}}$ in $(\R^{2n},\omega)$ with a component $V^i$ in each dimension $i \in \Theta$, such that each $V^i$ is \textit{isotropic}, i.e.\ the restriction of $\omega$ to $V^i$ is identically zero. 
	The space of isotropic $\Theta$-flags is naturally identified with the flag manifold $\F_\Theta$ associated to $\Sp(2n,\R)$, see \cite[p. 206]{Bou82}. 
	
	A pair of isotropic $\Theta$-flags $V,W$ is antipodal if, for all $i \in \Theta$, $V^i \oplus (W^i)^\perp = \R^{2n}$.
	Here we let $W^\perp$ denote the set of vectors $v$ satisfying $\omega(v,w)=0$ for all $w \in W$.
	In our setup, the standard flag $\tau_\Theta$, given by
	$$
	\tau_\Theta^i = {\rm Span}\{e_1,\dots,e_i\},\quad  i \in \Theta,
	$$ 
	is an isotropic $\Theta$-flag as is the standard opposite flag $\tau_\Theta^{\rm opp}$, given by
	$$
	(\tau_\Theta^{\rm opp})^i = {\rm Span}\{e_{2n},\dots,e_{2n-i+1}\}, \quad i \in \Theta.
	$$
	Note that $\tau_\Theta$ is antipodal to $\tau_\Theta^{\rm opp}$. 
	The stabilizer of $\tau_\Theta$ in $G = \Sp(2n,\R)$ is the parabolic subgroup $P_\Theta$ consisting of the intersection of $G$ with block upper triangular matrices preserving the partial flag corresponding to $\tau_\Theta$. 
	The unipotent radical $U_\Theta$ of $P_\Theta$ acts simply transitively on the flags antipodal to $\tau_\Theta$.\footnote{$U_\Theta$ is also called the {\em horocyclic subgroup} associated to $\Theta$.}
	
	We are interested in the space of pairwise antipodal triples of flags $(\tau_-,\tau,\tau_+)$ in $\F_\Theta$: Let us assume that $\tau_- = \tau_\Theta$ and $\tau_+ = \tau_\Theta^{\rm opp}$. 
	Since $\tau$ is antipodal to $\tau_\Theta$, we may write $\tau = u \tau_\Theta^{\rm opp}$ for a unique $u \in U_\Theta$. 
	Each $u \in U_\Theta$ is strictly upper triangular in the standard basis. 
	We want to understand when $u \tau_\Theta^{\rm opp}$ is antipodal to $\tau_\Theta^{\rm opp}$.
	
	\begin{lemma}\label{lem:transverseCondition}
		$\tau = u \tau_\Theta^{\rm opp}$ is antipodal to $\tau_\Theta^{\rm opp}$ if and only if for all $k \in \Theta$, $p_k(u) \ne 0$. 
	\end{lemma}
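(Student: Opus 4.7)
The plan is to unwind the definition of antipodality in the symplectic flag manifold and match it directly against the definition of the antiprincipal minor $p_k$. The key computation is identifying the symplectic complement of the standard opposite flag.

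First, I would compute $\omega(e_i, e_{2n-j+1})$ using the explicit form $\omega(x,y)=x^T J y$ with $Je_{2n-j+1} = (-1)^{2n-j+1} e_{j}$. This gives $\omega(e_i, e_{2n-j+1}) = (-1)^{2n-j+1}\delta_{ij}$, so a vector $v=\sum v_i e_i$ pairs to zero with each of $e_{2n},\dots,e_{2n-k+1}$ exactly when $v_1=\dots=v_k=0$. Therefore
\[
\bigl(\tau_\Theta^{\rm opp}\bigr)^{k,\perp} = \operatorname{Span}\{e_{k+1},\dots,e_{2n}\}.
\]
This is the step that cleanly separates the antipodality condition from everything else; once it is in hand, the rest is formal.

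Next, by definition the flag $\tau = u\tau_\Theta^{\rm opp}$ has $k$-th component $\tau^k=\operatorname{Span}\{ue_{2n},\dots,ue_{2n-k+1}\}$. Antipodality of $\tau$ and $\tau_\Theta^{\rm opp}$ at level $k$ is the condition
\[
\operatorname{Span}\{ue_{2n},\dots,ue_{2n-k+1}\} \oplus \operatorname{Span}\{e_{k+1},\dots,e_{2n}\} = \R^{2n}.
\]
Since both summands together comprise $2n$ vectors, this direct-sum decomposition holds if and only if these vectors form a basis of $\R^{2n}$, equivalently, if and only if their wedge product is nonzero.

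Finally, I would match this wedge product to the definition of $p_k$: by definition,
\[
ue_{2n}\wedge ue_{2n-1}\wedge\cdots\wedge ue_{2n-k+1}\wedge e_{k+1}\wedge\cdots\wedge e_{2n} = p_k(u)\, e_1\wedge\cdots\wedge e_{2n},
\]
so the wedge is nonzero precisely when $p_k(u)\ne 0$. Requiring this for every $k\in\Theta$ gives the claimed equivalence.

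The only step where one could slip up is the sign/index bookkeeping in identifying $(\tau_\Theta^{\rm opp})^{k,\perp}$, but this is a short direct computation rather than a genuine obstacle; the rest is a tautological translation of antipodality into the non-vanishing of the determinant that defines $p_k$.
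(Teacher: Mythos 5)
Your proof is correct and follows essentially the same route as the paper's: identify $((\tau_\Theta^{\rm opp})^k)^\perp = \operatorname{Span}\{e_{k+1},\dots,e_{2n}\}$, translate antipodality at level $k$ into the non-vanishing of the wedge product, and recognize that wedge as $p_k(u)$ by definition. The only difference is that you verify the symplectic complement by an explicit computation with $J$, whereas the paper simply observes it; this is a harmless elaboration, not a different argument.
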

	
	\begin{proof} 
		We observe that $(u\tau_\Theta^{\rm opp})^k = \Span\{ue_{2n},\dots,ue_{2n-k+1}\}$ and $((\tau_\Theta^{\rm opp})^k)^\perp = \Span\{e_{2n},e_{2n-1},\dots,e_{k+1}\}$. 
		Therefore, $(u\tau_\Theta^{\rm opp})^k \oplus ((\tau_\Theta^{\rm opp})^k)^\perp = \R^{2n}$ if and only if \[ue_{2n}\wedge \cdots \wedge ue_{2n-k+1} \wedge e_{k+1} \wedge \cdots \wedge e_{2n} = p_k(u) e_1 \wedge e_2 \wedge \cdots \wedge e_{2n}\] is nonzero. 
	\end{proof}
	
	We are now in position to complete the proof of \Cref{thm:main}(i).
	
	\begin{lemma}\label{lem:symplectc_propI}
		Let $\Theta\subset \{1,\dots,n\}$.
		If $\Theta$ contains an odd integer, then $\mathcal{F}_\Theta$ has Property (I).
	\end{lemma}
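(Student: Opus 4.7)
The plan is to combine Lemma~\ref{lem:detCondition} (the Key Lemma) with Lemma~\ref{lem:transverseCondition} to produce a locally constant sign invariant on $C(\tau_\Theta) \cap C(\tau_\Theta^{\rm opp})$ that the inversion map flips.

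First, fix some odd $k \in \Theta$. Using the parametrization $u \mapsto u\tau_\Theta^{\rm opp}$ from $U_\Theta$ to $C(\tau_\Theta)$, identify $C(\tau_\Theta) \cap C(\tau_\Theta^{\rm opp})$ with $\Omega \coloneqq \{u\in U_\Theta : p_j(u)\ne 0 \text{ for all } j\in\Theta\}$ via Lemma~\ref{lem:transverseCondition}. The antiprincipal minor $p_k$ is a polynomial in the entries of $u$, hence continuous on $U_\Theta$, and nonvanishing on $\Omega$. Consequently $\mathrm{sgn}(p_k) \colon \Omega \to \{\pm 1\}$ is locally constant and therefore constant on each connected component of $\Omega$.

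Second, I will show the inversion map $\iota$ toggles this sign. Given $\tau = u\tau_\Theta^{\rm opp}\in C(\tau_\Theta)\cap C(\tau_\Theta^{\rm opp})$, by definition $\iota(\tau) = u^{-1}\tau_\Theta^{\rm opp}$, so the corresponding element of $U_\Theta$ is $u^{-1}$. By Lemma~\ref{lem:detCondition},
\[
p_k(u^{-1}) = (-1)^k p_k(u) = -p_k(u),
\]
since $k$ is odd. Thus $\mathrm{sgn}(p_k(u^{-1})) \ne \mathrm{sgn}(p_k(u))$, which forces $u$ and $u^{-1}$ to lie in distinct components of $\Omega$, i.e.\ $\tau$ and $\iota(\tau)$ lie in distinct connected components of $C(\tau_\Theta)\cap C(\tau_\Theta^{\rm opp})$. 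Since this holds for every $\tau$ in the intersection, no connected component is $\iota$-invariant, establishing Property (I).

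There is no real obstacle here beyond assembling the ingredients already in place: the Key Lemma supplies the crucial sign change, Lemma~\ref{lem:transverseCondition} identifies the intersection with an open subset of $U_\Theta$ on which antiprincipal minors are coordinates, and continuity of $p_k$ on $U_\Theta$ yields the connected-component invariant. I expect the entire argument to fit in a short paragraph.
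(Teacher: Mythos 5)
Your proposal is correct and takes essentially the same approach as the paper: both use \Cref{lem:transverseCondition} to identify $C(\tau_\Theta)\cap C(\tau_\Theta^{\rm opp})$ with the locus where the antiprincipal minors $p_j$, $j\in\Theta$, are nonzero, and then use \Cref{lem:detCondition} to see that the sign of $p_k$ for odd $k\in\Theta$ is a component invariant flipped by $\iota$. The only cosmetic difference is that you phrase this as a locally constant sign function while the paper runs the same argument by contradiction along a path.
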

	
	\begin{proof}
		Let $\Omega$ be any connected component of the intersection
		$C(\tau_{\Theta})\cap C(\tau_{\Theta}^{\rm opp})$. We would like to show that $\iota(\Omega) \cap \Omega = \emptyset$. Since $\iota$ preserves the intersection $C(\tau_{\Theta})\cap C(\tau_{\Theta}^{\rm opp})$ (see \Cref{lem:inversion_preserves}), it is enough to show that for some point $\tau\in \Omega$, $\iota(\tau)\not\in \Omega$. We argue by contradiction: 
		Given $\tau\in\Omega$, suppose, to the contrary, that there exists a continuous path $c:[-1,1] \to \Omega$ from $\iota(\tau) = c(-1)$ to $\tau = c(1)$.
		For $t\in[-1,1]$, let $u_t\in U_{\Theta}$ be the unique element such that $c(t) = u_t \tau_{\Theta}^{\rm opp}$. 
		Note that
		\[
		u_{-1} = u_1^{-1}.
		\]
		Let $k\in\Theta$ be an odd integer.
		By definition, $c(t)$ is antipodal to both $\tau_{\Theta}$ and $\tau_{\Theta}^{\rm opp}$, and so  by \Cref{lem:transverseCondition}, $p_k(u_t)\ne 0$ for all $t \in [-1,1]$.
		Since $t\mapsto u_t$ is continuous, the sign of $p_k(u_t)$ is constant for $t\in[-1,1]$. However,
		since $k$ is odd, by \Cref{lem:detCondition},
		\[
		p_k(u_{-1}) = -p_k(u_1),
		\]
		giving a contradiction.
	\end{proof}
	
	Together with \Cref{lem:symplectc_propI}, \Cref{thm:Anosovrestriction} implies the following:
	\begin{corollary}\label{cor:restriction_symplectic}
		Suppose that $\Theta\subset \{1,\dots,n\}$ contains an odd integer.
		If $\Gamma$ is a $\Theta$-Anosov subgroup of $\Sp(2n,\R)$, then $\Gamma$ is virtually isomorphic to a free group or a surface group.
	\end{corollary}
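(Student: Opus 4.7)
The plan is to derive this statement as an immediate consequence of the two main technical results established earlier in the paper, with essentially no additional work beyond chaining them together.

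First, I would invoke \Cref{lem:symplectc_propI}: since $\Theta \subset \{1,\dots,n\}$ contains an odd integer $k$, the flag manifold $\F_\Theta$ of $\Sp(2n,\R)$ enjoys Property (I). Recall this was proved by a continuity/sign argument on the antiprincipal minor $p_k$, exploiting the identity $p_k(u^{-1}) = (-1)^k p_k(u)$ from the Key Lemma (\Cref{lem:detCondition}) together with the transversality criterion of \Cref{lem:transverseCondition}; the oddness of $k$ is precisely what forces a sign change between $\tau$ and $\iota(\tau)$, obstructing a continuous connection within a single component of $C(\tau_\Theta)\cap C(\tau_\Theta^{\rm opp})$.

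Second, I would feed this into \Cref{thm:Anosovrestriction}. Since any $\Theta$-Anosov subgroup $\Gamma < \Sp(2n,\R)$ is in particular $\Theta$-boundary embedded, and since we have just verified that $\F_\Theta$ has Property (I), the theorem applies and yields that $\Gamma$ is virtually isomorphic to a free group or a surface group. This is exactly the conclusion we want, so the proof consists of two lines: cite \Cref{lem:symplectc_propI}, then cite \Cref{thm:Anosovrestriction}.

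There is no real obstacle here, as all the substantive work has already been carried out: the symplectic-specific input is isolated in \Cref{lem:symplectc_propI}, while \Cref{thm:Anosovrestriction} is the general implication from Property (I) to the virtual free/surface dichotomy (via \Cref{lem:propertyIimpliesmaximallyantipodal} and the Tukia--Gabai--Freden--Casson--Jungreis characterization of convergence groups on $S^1$). So the proof proposal is simply to combine these two ingredients and note that the hypothesis ``$\Theta$ contains an odd integer'' is used only to verify Property (I).
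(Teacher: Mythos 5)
Your proposal is correct and matches the paper exactly: the corollary is stated as the immediate combination of \Cref{lem:symplectc_propI} (which gives Property (I) for $\F_\Theta$ when $\Theta$ contains an odd integer) with \Cref{thm:Anosovrestriction}. Nothing further is needed.
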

	
	\subsection{Examples from rank 1 subgroups}\label{sec:examples}
	
	In \S\ref{sec:restrictionOnAnosov}, we showed that if $\Theta \subset \{1,\dots,n\}$ contains an odd integer, then $\Theta$-Anosov subgroups of $\Sp(2n,\R)$ are virtually free or surface groups.
	In this section we show that this result is optimal.
	Let $\Theta_{\rm even}$ denote the set of all even integers in $\{1,\dots,n\}$, and let $\F_{\rm even}$ denote $\F_{\Theta_{\rm even}}$.
	We construct examples of $\Theta_{\rm even}$-Anosov subgroups of $\Sp(2n,\R)$, which are not virtually free or surface groups.
	
	\subsubsection{\texorpdfstring{$\Theta_{\rm even}$}{Theta even}-Anosov subgroups inside a copy of \texorpdfstring{$\SL(2,\C)$}{SL(2,C)}}\label{sec:examples from sl2c}
	
	When $n$ is even, any irreducible representation $\SL(2,\C) \to \SL(n,\C)$ preserves a symplectic form, so the image is contained in $\Sp(n,\C) \subset \Sp(2n,\R)$, up to conjugating the symplectic form to our standard one. 
	We let $\rho_n \colon \SL(2,\C) \to \Sp(2n,\R)$ denote this representation. 
	When $n$ is odd, we consider the representation $\rho_{n} \colon \SL(2,\C) \to \Sp(2n,\R)$ obtained by direct summing $\rho_{n-1}$ with a trivial $2$-dimensional representation.
	For concreteness, we fix the embedding $\Sp(2n,\R) \to \Sp(2n+2,\R)$ given by
	\begin{equation}\label{eqn:inclusion sp2n to sp2n+2}
		\begin{bmatrix} A & B \\ C & D \end{bmatrix} \mapsto \begin{bmatrix} A & 0 & B \\ 0 & I & 0 \\ C & 0 & D \end{bmatrix}
	\end{equation}
	where $I$ is the $2 \times 2$ identiy matrix.
	
	\begin{theorem}\label{thm:SL2Cexample}
		Let $n \ge 2$. If $\Gamma$ is a convex cocompact subgroup of $\SL(2,\C)$, then $\rho_n(\Gamma)$ is a $\Theta_{\rm even}$-Anosov subgroup of $\Sp(2n,\R)$.
	\end{theorem}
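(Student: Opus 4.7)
The plan is to apply the Cartan-projection characterization of Anosov representations (due to Kapovich--Leeb--Porti and Gu\'eritaud--Guichard--Kassel--Wienhard): for a word hyperbolic group $\Gamma$, a representation $\rho \colon \Gamma \to G$ is $\Theta$-Anosov if and only if for every $\alpha \in \Theta$ there exist constants $c, C > 0$ with $\alpha(\mu(\rho(\gamma))) \ge c |\gamma|_\Gamma - C$ for all $\gamma \in \Gamma$. Since $\Gamma$ is convex cocompact in $\SL(2,\C)$, it is word hyperbolic and its orbit map into $\HH^3$ is a quasi-isometric embedding, so the Cartan projection $t(\gamma) \in \R_{\ge 0}$ of $\gamma$ in $\SL(2,\C)$ (proportional to the hyperbolic displacement at the basepoint) satisfies $t(\gamma) \ge c'|\gamma|_\Gamma - C'$. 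It therefore suffices to show that $\alpha_k(\mu(\rho_n(g)))$ is a strictly positive multiple of $t(g)$ for every $g \in \SL(2,\C)$ and every $k \in \Theta_{\rm even}$.

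For $n$ even, $\rho_n$ is the irreducible complex representation of $\SL(2,\C)$ on $\Sym^{n-1}(\C^2) \cong \C^n$, whose weights with respect to a diagonal Cartan are $n-1, n-3, \dots, -(n-1)$. Hence if $g$ has complex singular values $e^{\pm t}$, the complex singular values of $\rho_n(g)$ are $e^{(n-1)t}, e^{(n-3)t}, \dots, e^{-(n-1)t}$. Viewed as a real $2n \times 2n$ matrix, each complex singular value acquires real multiplicity two; since $\rho_n$ sends $\SU(2)$ into a maximal compact subgroup of $\Sp(2n,\R)$, the Cartan projection in $\overline{\mfa^+} \subset \mathfrak{sp}(2n,\R)$ is
\[
\mu(\rho_n(g)) = \bigl((n-1)t,\,(n-1)t,\,(n-3)t,\,(n-3)t,\,\ldots,\,t,\,t\bigr).
\]
Evaluating the simple roots of type $\mathrm{C}_n$ then gives $\alpha_k(\mu(\rho_n(g))) = 0$ for odd $k$ and $\alpha_k(\mu(\rho_n(g))) = 2t$ for every even $k \in \Theta_{\rm even}$.

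For $n$ odd, the same computation applied to $\rho_n = \rho_{n-1} \oplus \mathrm{triv}_2$ inserts two unit singular values which, after sorting, become the bottom two entries $0$ in $\mu(\rho_n(g)) = ((n-2)t, (n-2)t, \dots, t, t, 0)$; the simple roots indexed by $\Theta_{\rm even} = \{2, 4, \dots, n-1\}$ again evaluate to positive multiples of $t$. Combined with $t(\gamma) \gtrsim |\gamma|_\Gamma$ from convex cocompactness, this verifies the Cartan-projection criterion and concludes that $\rho_n(\Gamma)$ is $\Theta_{\rm even}$-Anosov.

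The main subtlety lies in the bookkeeping of the $n$-odd case: one must verify that the two unit singular values from $\mathrm{triv}_2$ indeed land at the bottom of the sorted list of singular values, and that the resulting zero entries do not cause any root indexed by $\Theta_{\rm even}$ to collapse onto a Weyl wall. Everything else reduces to elementary weight theory for irreducible $\SL(2,\C)$-representations.
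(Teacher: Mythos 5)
Your proof is correct and rests on the same core computation as the paper's: the weights of $d\rho_n$ on the Cartan subspace of $\mfsl(2,\C)$ are $(n-1),(n-1),(n-3),(n-3),\dots$, so exactly the even-indexed simple roots of type $\mathrm{C}_n$ are uniformly positive (linear in $t$) on the image of the positive Weyl chamber, including the careful handling of the two zero weights in the odd-$n$ case. The only difference is packaging: the paper feeds this positivity into Guichard--Wienhard's result on composing an Anosov representation (here the convex cocompact inclusion $\Gamma\hookrightarrow\SL(2,\C)$) with a Lie group homomorphism \cite[Prop.~4.7]{GW12}, whereas you invoke the equivalent Cartan-projection characterization of Anosov subgroups together with the undistortedness of convex cocompact subgroups; both routes are standard and your root bookkeeping matches the paper's.
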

	
	Before discussing the proof, we briefly recall the Cartan decomposition and restricted roots for $\mfsp(2n,\R)$.
	The map 
	\[\theta \colon \mfsp(2n,\R) \to \mfsp(2n,\R), \hspace*{2pc} \theta(X) = -X^T\]
	is a Cartan involution of $\mfsp(2n,\R)$. 
	The fixed point set is $\mfk = \mathfrak{o}(2n,\R) \cap \mfsp(2n,\R)$ and its $-1$-eigenspace, denoted $\mfp$, is the intersection of $\mfsp(2n,\R)$ with the space of symmetric matrices.
	The Cartan decomposition is $\mfsp(2n,\R) = \mfk \oplus \mfp$.
	We let $\mfa$ denote the intersection of $\mfsp(2n,\R)$ with the space of diagonal matrices.
	Precisely, $\mfa = \{ \diag{\lambda_1,\dots,\lambda_n,-\lambda_n,\dots,-\lambda_1} :\ \lambda_i \in \R \}$.
	It is easy to see that $\mfa$ is a maximal abelian subspace of $\mfp$, since it contains a diagonal element with distinct entries.
	The restricted roots $\Sigma \subset \mfa^\ast$ are the nonzero weights of the adjoint representation of $\mfa$ on $\mfg$. 
	In this case, $\Sigma \cup \{0\}= \{ \pm \lambda_i \pm \lambda_j\}_{ij}$.
	We take the positive Weyl chamber to be the subset of $\mfa$ with strictly decreasing entries. 
	The corresponding set of simple roots is then $\Delta=\{\lambda_1 -\lambda_2,\lambda_2-\lambda_3,\dots,\lambda_{n-1}-\lambda_n,2\lambda_n\}$.
	We label the simple roots by $\alpha_i = \lambda_i-\lambda_{i-1}$ for $i <n$ and $\alpha_n=2\lambda_n$. 
	Then the subset $\Theta_{\rm even}$ is $\{\alpha_2,\dots, \alpha_{2\lfloor \frac{n}{2}\rfloor}\}$, i.e.\ the subset of $\Delta$ consisting of simple roots labelled by even integers.
	Observe that the embedding given by (\ref{eqn:inclusion sp2n to sp2n+2}) preserves the choices above and maps the positive Weyl chamber to the nonnegative Weyl chamber.
	
	\begin{proof}[Proof of Theorem \ref*{thm:SL2Cexample}]
		We first consider the case when $n$ is even:
		The representation $\rho_n$ induces a representation of $\mfsl(2,\C)$.
		Up to conjugation, this Lie algebra representation maps 
		\[ H' = \begin{bmatrix} 1 & 0 \\ 0 & -1 \end{bmatrix} \mapsto H= \diag{(n-1)I,(n-3)I, \dots,(1-n)I} ,\]
		where $H$ is expressed as a block diagonal matrix of $2 \times 2$ blocks ($I$ denotes the $2 \times 2$ identity matrix); this is easy to see directly but also follows from \Cref{prop:standard sl2C rep} below.
		Since the simple roots in $\Theta_{\rm even}$ are positive on $H$, we may apply \cite[Proposition 4.7]{GW12}, which says that for any convex cocompact subgroup $\Gamma$ of $\SL(2,\C)$, $\rho_n(\Gamma)$ is $\Theta_{\rm even}$-Anosov in $\Sp(2n,\R)$. 
		(We remark that our subset $\Theta$ would be called $\Delta \setminus \Theta$ in the conventions of \cite{GW12}.)
		
		For $n$ odd,
		we start with the matrix $H$ for $\rho_{n-1}$.
		Applying the inclusion (\ref{eqn:inclusion sp2n to sp2n+2}) to $H$ yields a matrix on which each simple root in $\Theta_{\rm even}$ is positive, and \cite[Proposition 4.7]{GW12} applies once again.
	\end{proof}
	
	\subsubsection{The \texorpdfstring{$\Theta_{\rm even}$}{Theta even}-limit set of \texorpdfstring{$\rho_n(\SL(2,\C))$}{SL(2,C)} is maximally antipodal}\label{sec:sl2c maximally antipodal}
	
	In \Cref{thm:SL2Cexample}, we obtained antipodal subsets of $\F_{\rm even}$ homeomorphic to $S^2$ as a limit set of $\rho_n(\Gamma)$, where $\Gamma<\SL(2,\C)$ is any uniform lattice.
	In this section we show these subsets are maximally antipodal, see \Cref{cor:sl2c limit set is MA in Iso2 and F_even}.
	In fact we will show something slightly stronger.
	Via the natural inclusion $\Sp(2n,\R) \subset \SL(2n,\R)$, a $\Theta_{\rm even}$-Anosov subgroup of $\Sp(2n,\R)$ may be viewed as an Anosov subgroup of $\SL(2n,\R)$ with respect to the subset of simple roots labelled by even integers $2 \le k \le 2n-2$, see \cite[Proposition 4.4]{GW12}.
	In particular, such a subgroup is $\{2,2n-2\}$-Anosov in $\SL(2n,\R)$ and has a flag limit set in the flag manifold $\mathcal{F}_{2,2n-2}$ of $\SL(2n,\R)$ consisting of pairs $(x,y)$ where $x$ is a $2$-plane in $\R^{2n}$ and $y$ is a codimension $2$-plane containing $x$.
	We show in \Cref{thm:sl2c limit set is maximally antipodal} that the limit set of $\rho_n(\SL(2,\C))$ in $\mathcal{F}_{2,2n-2}$ is maximally antipodal.
	
	In the analysis below, we view all matrices as $2\times2$ block matrices. 
	We fix the following basis of $2\times 2$ matrices:
	\begin{equation}\label{eqn:M2basis}
		I= \begin{bmatrix} 1 & 0 \\ 0 & 1 \end{bmatrix} \hspace{1pc} 
		T= \begin{bmatrix} 1 & 0 \\ 0 & -1 \end{bmatrix} \hspace{1pc} 
		R=\begin{bmatrix} 0 & -1 \\ 1 & 0 \end{bmatrix} \hspace{1pc} 
		P= \begin{bmatrix} 0 & 1 \\ 1 & 0 \end{bmatrix}  
	\end{equation}
	We note that the nonzero matrices in the span of $I$ and $R$ have positive determinant while the nonzero matrices in the span of $T$ and $P$ are traceless symmetric with negative determinant. 
	
	\begin{theorem}\label{thm:sl2c limit set is maximally antipodal}
		Let $\Lambda$ be the $\{2,2n-2\}$-limit set in $\mathcal{F}_{2,2n-2}$ of $\rho_n(\SL(2,\C))\subset \SL(2n,\R)$.
		Then $\Lambda$ is maximally antipodal in $\mathcal{F}_{2,2n-2}$.
	\end{theorem}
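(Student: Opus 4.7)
The plan is to establish the stronger identity $\bigcup_{[v] \in \mathbb{CP}^1} W_{[v]} = \R^{2n}$, where I abbreviate $(V_{[v]}, W_{[v]}) \in \Lambda$ for the element of $\Lambda$ parametrized by $[v] \in \mathbb{CP}^1$. Granting this, for any $(x, y) \in \mathcal{F}_{2, 2n-2}$, I pick a nonzero $u \in x$ and find $[v]$ with $u \in W_{[v]}$. Then $x \cap W_{[v]} \ne 0$, which violates antipodality of $(x, y)$ with $(V_{[v]}, W_{[v]})$ in $\mathcal{F}_{2, 2n-2}$. Hence $\Lambda$ is maximally antipodal.

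To identify $\Lambda$ concretely, observe from the proof of \Cref{thm:SL2Cexample} that the attracting flag of $\rho_n(\exp(tH'))$ in $\SL(2n, \R)$ for $t > 0$ has $V_{[(1:0)]}$ equal to the top complex weight space of $\rho_n$ and $W_{[(1:0)]}$ equal to the direct sum of all but the bottom complex weight space. For $n$ even, $\rho_n$ is the irreducible representation on $\Sym^{n-1}\C^2 \cong \C^n$, and by $\SL(2, \C)$-equivariance the map $[v] \mapsto V_{[v]} = \C \cdot v^{n-1}$ is the Veronese embedding $\mathbb{CP}^1 \hookrightarrow \mathbb{CP}^{n-1}$, while $W_{[v]}$ is the corresponding osculating complex hyperplane to the rational normal curve at $[v^{n-1}]$. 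For $n$ odd, $\rho_n = \rho_{n-1} \oplus \id_{\R^2}$, so $W_{[v]} = \hat W_{[v]} \oplus F$, with $\hat W_{[v]} \subset \C^{n-1}$ the osculating hyperplane for $\rho_{n-1}$ and $F \cong \R^2$ the trivial summand. Consequently $\bigcup W_{[v]} = (\bigcup \hat W_{[v]}) \oplus F$, reducing the identity to the irreducible case applied with $n - 1$ in place of $n$.

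For the irreducible case, I use the basis $e_j = x^{n-1-j} y^j$ of $\Sym^{n-1}\C^2$. Expressing the Veronese curve as $V(t) = (1, t, \dots, t^{n-1})^T$ in the rescaled basis $\hat e_j = e_j / \binom{n-1}{j}$, a Cramer's rule computation applied to $\det[V(t), V'(t), \dots, V^{(n-2)}(t), u]$ in the basis $\{V^{(k)}(t)\}_{k=0}^{n-1}$ shows that $u = \sum_j u_j \hat e_j$ lies in $W_{[v = (a:b)]}$ if and only if the homogeneous polynomial
\[
\tilde p_u(a, b) = \sum_{j=0}^{n-1} u_j \binom{n-1}{j}(-b)^{n-1-j} a^j
\]
vanishes at $[a : b]$. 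For any nonzero $u$, $\tilde p_u$ is a nonzero section of $\mathcal{O}_{\mathbb{CP}^1}(n-1)$; since $n - 1 \ge 1$, it has at least one zero in $\mathbb{CP}^1$, yielding the desired $[v]$ with $u \in W_{[v]}$.

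The main technical hurdle is the Wronskian-style determinant computation deriving the closed-form expression for $\tilde p_u$; the key step is recognizing that $V^{(n-1)}(t) = (n-1)! \hat e_{n-1}$ is constant, so expanding $u$ in the Taylor basis at $t$ isolates a single coefficient that is polynomial in $t$ and linear in the components of $u$. Once the formula is in hand, the conclusion follows immediately from the elementary fact that nonzero sections of positive-degree line bundles on $\mathbb{CP}^1$ always vanish somewhere.
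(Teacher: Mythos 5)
Your proof is correct, and it takes a genuinely different route from the paper's. The paper works in the unipotent parametrization of $C(\tau_+)$ and reduces transversality to the non-vanishing of the antiprincipal minor $p_2(g\exp(\alpha X+\beta Y))$, i.e.\ of the determinant of the top-right $2\times 2$ block; it then finds a real zero of that determinant by a real-algebraic argument (Bezout's theorem applied to the homogenized $T$- and $P$-coefficients after an arbitrarily small perturbation of $g$, a parity-plus-complex-conjugation count to produce a real common root, the intermediate value theorem, and finally a compactness argument to remove the perturbation). You instead exploit the complex structure directly: $\Lambda$ is the flag curve of the rational normal curve, $W_{[v]}$ is the osculating complex hyperplane, and the condition $u\in W_{[v]}$ is the vanishing of a nonzero degree-$(n-1)$ form on $\mathbb{CP}^1$, so the fundamental theorem of algebra immediately gives $\bigcup_{[v]}W_{[v]}=\R^{2n}$. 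This is cleaner and strictly stronger (every vector, hence every $2$-plane, meets the $(2n-2)$-dimensional member of some limit flag, so no perturbation or limiting argument is needed), and it dispenses with the explicit matrix models of \Cref{prop:standard sl2C rep}. What the paper's method buys in exchange is uniformity with the rest of \S\ref{sec:res_symplectic}: the same minor-based setup drives \Cref{lem:symplectc_propI} and the $\SU(n-1,1)$ computation in \Cref{thm:MaximalAntipodalSU}, where the limit set is not a complex curve and your holomorphic-section argument would not apply verbatim. Two cosmetic points: your normalization should be $\hat e_j=\binom{n-1}{j}e_j$ (not $e_j/\binom{n-1}{j}$) if you want the curve to read $(1,t,\dots,t^{n-1})$ --- the displayed formula for $\tilde p_u$ is nonetheless correct in that basis, and in any case all that matters is that $u\mapsto \tilde p_u$ is an injective linear map into degree-$(n-1)$ forms; and in the odd case you should note (as you implicitly do) that vectors with vanishing $\C^{n-1}$-component lie in every $W_{[v]}$ via the trivial summand $F$.
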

	
	\begin{proof} 
		Let $\tau_+ \in \mathcal{F}_{2,2n-2}$ be the partial flag defined by $(\tau_+)^2=\Span\{e_1,e_2\}$ and $(\tau_+)^{2n-2}=\Span\{e_1,e_2,\dots,e_{2n-2}\}$.
		The horocyclic subgroup of $\SL(2n,\R)$ corresponding to $\tau_+$ is given by
		\[U = \left\{\begin{bmatrix} I & A & B \\ 0 & I_{2n-4} & C \\ 0 & 0 & I \end{bmatrix} \right\} \subset \SL(2n,\R) \]
		and acts simply transitively on $C(\tau_+) \subset \mathcal{F}_{2,2n-2}$.
		Let $\tau_- \in \mathcal{F}_{2,2n-2}$ be the partial flag with $(\tau_-)^2=\Span\{e_{2n},e_{2n-1}\}$ and $(\tau_-)^{2n-2}=\Span\{e_{2n},e_{2n-1},\dots,e_{3}\}$, and note that $\tau_-$ is transverse to $\tau_+$.
		We will show that for every $g\in U$, $g\tau_{-}$ is not antipodal to some point in $\Lambda$, up to an arbitrarily small perturbation of $g$.
		This suffices as the condition of being non-antipodal is closed: 
		to see this, consider 
		$$ \mathcal{E} = \{(y,f) \in \Lambda \times \mathcal{F} :\ f \in E(y) \} $$
		where $E(y) = \mathcal{F} \setminus C(y)$ is the set of flags non-antipodal to $y$.
		Note that each $E(y)$ is compact, so the fibers of $\mathcal{E} \to \Lambda$ are compact and the base is compact. 
		It follows that $\mathcal{E}$ is compact; indeed, since $\mathcal{F}$ is metrizable, it suffices to show that $\mathcal{E}$ is sequentially compact.
		Given a sequence $(y_n,f_n)$ in $\mathcal{E}$, we can assume that  $(y_n)$ converges to $y$ in $\Lambda$ up to passing to a subsequence.
		By passing to a further subsequence, we may assume that $f_n$ converges to $f$ in $\mathcal{F}$.
		Since each $f_n \in E(y_n)$, the limit $f$ is contained in $E(y)$, so $\mathcal{E}$ is sequentially compact.
		Therefore the image of $\mathcal{E} \to \mathcal{F}$ is compact, and this image is exactly the set of flags non-antipodal to some point of $\Lambda$. 
		
		First consider the case where $n$ is even. 
		We let $H,X,Y$ denote the images of $H',X',Y'$ under the representation $\rho_n \colon \mfsl(2,\C) \to \mfsp(2n,\R)$.
		By \Cref{prop:standard sl2C rep} below we may assume that
		\begin{equation}\label{eqn:explicitEvenRep}
			\begin{aligned}
				H =~& \diag{(n-1)I, (n-3)I, \dots , (1-n)I}\\
				X =~& \superdiag{1}{c_1 I,\dots, c_{n/2-1}I,c_{n/2}T,-c_{n/2+1}I,\dots,-c_{n}I}\\
				Y =~& \superdiag{1}{c_1 R,\dots,c_{n/2-1}R,c_{n/2}P,c_{n/2+1}R,\dots,c_{n}R}
			\end{aligned} 
		\end{equation}
		where $c_k = \sqrt{kn-k^2}$. 
		We have 
		\[ 
		\Lambda \setminus \{\tau_+\}  = \{\exp(\alpha X + \beta Y)\tau_- :\  \alpha, \beta \in \R \} .
		\] 
		Consider $\exp(\alpha X + \beta Y)$ as a block $2 \times 2 $ matrix. 
		The entries are polynomials in $\alpha, \beta$. 
		The degree of a $2 \times 2$ block of $\exp(\alpha X + \beta Y)$ is $k$ when that block is on the $k$th diagonal.
		In particular, the highest degree terms are the top-right block of $\exp(\alpha X + \beta Y)$, and the degree is $n-1$, hence odd.
		
		The top right block of $\exp(\alpha X + \beta Y)$ is traceless symmetric, as we now explain. Looking at the explicit representation in \eqref{eqn:explicitEvenRep}, we observe that 
		\[ \alpha X + \beta Y = \superdiag{1}{C_1,\dots,C_{n-1}} \]
		where $C_k=\alpha A_k+\beta B_k$.
		In particular, $C_{n/2}$ is in the span of $T$ and $P$  and for $k \neq n/2$, $C_k$ is in the span of $I$ and $R$.
		The top right block of $\exp(\alpha X + \beta Y)$ is then the product 
		\[ \exp(\alpha X + \beta Y)_{1n} = \frac1{(n-1)!} C_1C_2 \cdots C_{n/2-1} C_{n/2} C_{n/2+1} \cdots C_{n-1} .\] 
		Multiplying any matrix in the span of $T$ and $P$ on the left or right by a matrix in the span of $I$ and $R$ remains in the span of $T$ and $P$. 
		Thus $\exp(\alpha X+\beta Y)_{1n}$ is in the span of $T$ and $P$ and therefore is traceless symmetric. 
		
		Any other flag $g^{-1} \tau_- \in C(\tau_+) \subset \mathcal{F}_{2,2n-2}$ is transverse to $\Lambda$ if and only if the second antiprincipal minor $p_2(g \exp(\alpha X + \beta Y))$ is nonvanishing for all $\alpha,\beta \in \R$ by \Cref{lem:transverseCondition}. 
		We will show that for any $g \in U$ there exists $g'$ arbitrarily near $g$ and $\alpha,\beta$ such that $p_2(g' \exp(\alpha X + \beta Y))=0$. 
		
		The top-right block of $g \exp(\alpha X + \beta Y)$ is $Z_{1n}=\sum_j g_{1j} \exp(\alpha X + \beta Y)_{jn}$.
		As a polynomial in $\alpha, \beta$, the highest degree term is $g_{11}\exp(\alpha X + \beta Y)_{1n}$ which has degree $n-1$.
		Since $g_{11}$ is the identity matrix this term is traceless symmetric. 
		
		We now consider the components of $Z_{1n}$ in the basis $I,R,T,P$, see \eqref{eqn:M2basis}.
		Observe that the coefficients of $T$ and $P$ have a higher degree than the coefficients of $I,R$, so for sufficiently large $\alpha,\beta$ the determinant of $Z_{1n}$ is negative. 
		
		The coefficients of $T$ and $P$ have a common real root, up to an arbitrarily small perturbation of $g$.
		Indeed, let $f_T(\alpha,\beta)$ (resp.\ $f_P(\alpha,\beta)$) be the coefficient of $T$ (resp.\ $P$) in $Z_{1n}$.
		$f_T$ and $f_P$ are real polynomials in the variables $\alpha,\beta$. 
		Let $\overline{f_T}$ (resp.\ $\overline{f_P}$) denote the sum of highest degree terms of $f_T$ (resp.\ $f_P$).
		In fact, $\overline{f_T}$ (resp.\ $\overline{f_P}$) is independent of $g$, and equals the coefficient of $T$ (resp.\ $P$) in $\exp(\alpha X + \beta Y)_{1n}$. 
		It is convenient to consider the homogenizations $\widehat{f_T}(\alpha,\beta,\gamma)$ and $\widehat{f_P}(\alpha,\beta,\gamma)$, i.e.\ homogeneous polynomials such that $\widehat{f_T}(\alpha,\beta,1)=f_T(\alpha,\beta)$ and $\widehat{f_P}(\alpha,\beta,1)=f_P(\alpha,\beta)$.
		Note that we can modify the constant terms of $f_T$, $f_P$ by modifying $g_{1n}$. 
		Therefore, up to an arbitrarily small perturbation of $g$, we can assume that the zero sets of $\widehat{f_T}$ and $\widehat{f_P}$ in $\mathbb{CP}^2$ have no common algebraic components. 
		Then by Bezout's theorem, there are exactly $(n-1)^2$ common zeros of $\widehat{f_T}$ and $\widehat{f_P}$ in $\mathbb{CP}^2$, counted with multiplicity. 
		Since $n$ is even, there are an odd number of zeros; since complex conjugation permutes the roots preserving multiplicity, there exists a real root. 
		It then suffices to rule out the possibility of a real root ``at infinity,'' i.e.\ when $\gamma=0$. 
		A real root at infinity for $\widehat{f_T}$ and $\widehat{f_P}$ corresponds to a nonzero root of $\overline{f_T}$ and $\overline{f_P}$. 
		But a common real root of $\overline{f_T}$ and $\overline{f_P}$ is a pair of real numbers $(\alpha,\beta)$ such that $\exp(\alpha X + \beta Y)_{1n}=0$. 
		This can only occur when $(\alpha,\beta)=(0,0)$, e.g.\ by transversality of the limit set.
		
		At a common real root of $f_T$ and $f_P$, the block $Z_{1n}$ is in the span of $R$ and $I$ and so has determinant $\ge 0$. 
		If it is zero, we are done; otherwise, we apply the intermediate value theorem, to see that a zero exists, and then we are done. 
		
		We now consider $\rho_{n}$ where $n$ is odd.
		The homomorphism $\rho_{n}$ is induced by $\rho_{n-1}$ and the inclusion (\ref{eqn:inclusion sp2n to sp2n+2}).
		Then $H=\rho_{n}(H')$ is still diagonal with decreasing entries, but $X$ and $Y$ no longer have all entries on the superdiagonal.
		However, the top-right block for $\exp(\alpha X + \beta Y)$ under $\rho_{n}$ agrees with the top-right block for $\rho_{n-1}$.
		To see this, consider a simultaneous permutation of the rows and columns swapping the central $2 \times 2$ block with the bottom-right $2 \times 2$ block:
		\[
		\begin{bmatrix} A & 0 & B \\ 0 & I & 0 \\ C & 0 & D \end{bmatrix} \mapsto \begin{bmatrix} A & B & 0 \\ C & D & 0 \\ 0 & 0 & I \end{bmatrix}.
		\]
		Then as before, the top right block of $\exp(\alpha X + \beta Y)$  is traceless symmetric, has strictly larger degree than any other block, and this degree is odd.    
		From this point we may apply the same proof as the even case.   
	\end{proof}
	
	\begin{corollary}\label{cor:sl2c limit set is MA in Iso2 and F_even}
		The $\{2\}$-limit set $\Lambda_{\Iso_2}$ of $\rho_n(\SL(2,\C))<\Sp(2n,\R)$ is maximally antipodal in $\Iso_2(\R^{2n},\omega)$.
		Moreover, the $\Theta_{\rm even}$-limit set $\Lambda_{\rm even} \subset \mathcal{F}_{\rm even}$ of $\rho_n(\SL(2,\C)) < \Sp(2n,\R)$ is also maximally antipodal.
	\end{corollary}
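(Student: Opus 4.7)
The plan is to deduce both statements from \Cref{thm:sl2c limit set is maximally antipodal}, which establishes that the $\{2,2n-2\}$-limit set $\Lambda \subset \mathcal{F}_{2,2n-2}$ of $\rho_n(\SL(2,\C))$, viewed inside $\SL(2n,\R)$, is maximally antipodal. The rough idea is that $\Iso_2(\R^{2n},\omega)$ sits inside $\mathcal{F}_{2,2n-2}$ via $V \mapsto (V,V^\perp)$, while $\mathcal{F}_{\rm even}$ projects onto $\Iso_2(\R^{2n},\omega)$ by forgetting everything but the $2$-dimensional piece, and both maps are compatible with limit sets and with antipodality in the appropriate direction.

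For the first claim, I will use the $\Sp(2n,\R)$-equivariant inclusion $\Iso_2(\R^{2n},\omega) \hookrightarrow \mathcal{F}_{2,2n-2}$ defined by $V \mapsto (V, V^\perp)$. Because $\rho_n(\SL(2,\C)) \subset \Sp(2n,\R)$ preserves $\omega$, this inclusion identifies $\Lambda_{\Iso_2}$ with $\Lambda$. The key observation is that antipodality in $\Iso_2(\R^{2n},\omega)$, namely $V_1 \oplus V_2^\perp = \R^{2n}$, already implies the a priori stronger antipodality condition in $\mathcal{F}_{2,2n-2}$, which demands both $V_1 \oplus V_2^\perp = \R^{2n}$ and $V_2 \oplus V_1^\perp = \R^{2n}$. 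Indeed, the pairing $\omega \colon V_1 \times V_2 \to \R$ has left kernel $V_1 \cap V_2^\perp$ and right kernel $V_2 \cap V_1^\perp$, and these kernels have equal dimension because $\dim V_1 = \dim V_2 = 2$. Therefore a hypothetical $V \in \Iso_2(\R^{2n},\omega)$ antipodal to every element of $\Lambda_{\Iso_2}$ would give a flag $(V, V^\perp) \in \mathcal{F}_{2,2n-2}$ antipodal to every element of $\Lambda$, contradicting \Cref{thm:sl2c limit set is maximally antipodal}.

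For the second claim, I will use the $\Sp(2n,\R)$-equivariant forgetful projection $\pi \colon \mathcal{F}_{\rm even} \to \Iso_2(\R^{2n},\omega)$ picking out the $2$-dimensional component of each isotropic flag. By equivariance and uniqueness of the strongly dynamics-preserving boundary map, $\pi(\Lambda_{\rm even}) = \Lambda_{\Iso_2}$. Since antipodality of two flags in $\mathcal{F}_{\rm even}$ requires, in particular, antipodality of their $2$-dimensional components, any $\tau \in \mathcal{F}_{\rm even}$ antipodal to every element of $\Lambda_{\rm even}$ would project to a point $\pi(\tau) \in \Iso_2(\R^{2n},\omega)$ antipodal to every element of $\Lambda_{\Iso_2}$, contradicting the part of the corollary just proved.

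I do not expect a serious obstacle here: the whole argument is a reduction to \Cref{thm:sl2c limit set is maximally antipodal} via standard compatibilities between the various flag manifolds. The only delicate point is the short symmetry argument showing that the one-sided transversality condition defining $\Iso_2$-antipodality automatically upgrades to the two-sided condition in $\mathcal{F}_{2,2n-2}$; the rest is bookkeeping about equivariance and strong dynamics-preservation of boundary maps under the natural projection and inclusion.
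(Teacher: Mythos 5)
Your proof is correct and follows essentially the same route as the paper's: reduce the $\Iso_2$ statement to \Cref{thm:sl2c limit set is maximally antipodal} via the embedding $V\mapsto(V,V^\perp)$, then reduce the $\mathcal{F}_{\rm even}$ statement to the $\Iso_2$ one via the forgetful projection. The only difference is that you phrase things contrapositively and spell out the symmetry of the transversality condition on isotropic $2$-planes (equality of the left and right kernels of the pairing $\omega\colon V_1\times V_2\to\R$), a point the paper leaves implicit; this is a worthwhile clarification but not a different argument.
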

	
	\begin{proof}
		The isotropic flag manifold $\Iso_2(\R^{2n},\omega)$ for $\Sp(2n,\R)$ naturally embeds into the flag manifold $\mathcal{F}_{2,2n-2}$ of $\SL(2n,\R)$ via the map $V\mapsto(V \subset V^\perp)$. 
		By \Cref{thm:sl2c limit set is maximally antipodal}, $\Lambda$ is maximally antipodal in $\mathcal{F}_{2,2n-2}$. 
		Then in particular, every isotropic flag in $\Iso_2(\R^{2n},\omega) \subset \mathcal{F}_{2,2n-2}$ is non-transverse to some point of $\Lambda$. 
		So $\Lambda_{\Iso_2}$ of is maximally antipodal in $\Iso_2(\R^{2n},\omega)$.
		
		Now consider an arbitrary element $\tau$ of $\mathcal{F}_{\rm even}$.
		By the previous paragraph, the $2$-dimensional part of $\tau$ is non-transverse to some point of $\Lambda_{\Iso_2}$. 
		So $\Lambda_{\rm even}$ is maximally antipodal in  $\mathcal{F}_{\rm even}$.
	\end{proof}
	
	\begin{proposition}\label{prop:standard sl2C rep}
		Let $n$ be even. 
		Let $\rho \colon \mfsl(2,\C) \to \mfsp(2n,\R)$ be a representation such that the eigenvalues  of 
		$$ \rho \left( \begin{bmatrix} 1 & 0 \\ 0 & -1 \end{bmatrix} \right) $$
		are $(n-1,n-1,n-3,n-3,\dots, n-2k+1,\dots,1-n,1-n)$ (with multiplicity).
		Then, up to conjugation by an element of $\Sp(2n,\R)$, $\rho$ intertwines the Cartan involutions of $\mfsl(2,\C)$ and $\mfsp(2n,\R)$ and moreover maps
		\begin{equation}
			\begin{aligned}
				H' = \begin{bmatrix} 1 & 0 \\ 0 & -1 \end{bmatrix} \mapsto H =~& \diag{(n-1)I, (n-3)I, \dots , (1-n)I}\\
				X' = \begin{bmatrix} 0 & 1 \\ 0 & 0 \end{bmatrix} \mapsto X =~& \superdiag{1}{c_1 I,\dots, c_{n/2-1}I,c_{n/2}T,-c_{n/2+1}I,\dots,-c_{n}I} \\
				Y' = \begin{bmatrix} 0 & i \\ 0 & 0 \end{bmatrix} \mapsto Y =~& \superdiag{1}{c_1 R,\dots,c_{n/2-1}R,c_{n/2}P,c_{n/2+1}R,\dots,c_{n}R}
			\end{aligned} 
		\end{equation}
	\end{proposition}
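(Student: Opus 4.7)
The plan is to conjugate $\rho$ in stages by elements of $\Sp(2n,\R)$: first diagonalize $\rho(H')$, then use the commutation relations to constrain $\rho(X')$ and $\rho(Y')$ to a specific block form, and finally fix the remaining scalars via $\mfsl(2,\C)$-representation theory.

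Since $\rho(H')$ has the specified real eigenvalues, it is a hyperbolic (i.e., $\R$-diagonalizable) semisimple element of $\mfsp(2n,\R)$, and can therefore be conjugated into the standard split Cartan $\mfa$ by an element of $\Sp(2n,\R)$. The eigenvalue data then pins $\rho(H') = H = \diag{(n-1)I,(n-3)I,\dots,(1-n)I}$ once we fix a Weyl chamber. The relations $[H',X'] = 2X'$ and $[H',Y'] = 2Y'$ (the latter from $Y' = iX'$ and the complex-bilinearity of the bracket: $[H',iX'] = i[H',X'] = 2iX' = 2Y'$) force $\rho(X'),\rho(Y')$ to lie in the $\ad H$-eigenspace $\mfg_2 \subset \mfsp(2n,\R)$ of eigenvalue $2$. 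Computing $\mfg_2$ explicitly: the $\ad H$-eigenvalue on the matrix unit $E_{ij}$ equals $H_{ii}-H_{jj}$, which is $2$ precisely when $i$ and $j$ lie in adjacent $2\times 2$ blocks of $H$; the symplectic condition $M^T J + JM = 0$ then couples blocks in the upper and lower halves via the anti-diagonal structure of the standard $J$, forcing the lower-half block $B_{n-k}$ to be determined from the upper-half block $B_k$ by a transposition-with-sign rule, and forcing the middle block $B_{n/2}$ to be traceless symmetric. This shape already constrains the block content of $\rho(X')$ and $\rho(Y')$ to the span of $\{I,R\}$ in the non-central blocks (with sign flips in the lower half) and $\{T,P\}$ in the middle.

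The further bracket relation $[X',Y'] = 0$ (a consequence of $Y' = iX'$, as $X'$ and $iX'$ commute in the complex matrix algebra) forces the block entries of $\rho(X')$ and $\rho(Y')$ in each position to come from a single commuting pair of $2\times 2$ matrices; up to scalar this pair must be $(I,R)$ in the non-central blocks and $(T,P)$ in the middle. Finally, by the uniqueness of the complex irreducible representation $V_{n-1} = \Sym^{n-1}(\C^2)$ of $\mfsl(2,\C)$ with highest weight $n-1$, the representation $\rho$ is equivalent to the standard realization, and the block coefficients must match the standard raising-operator matrix coefficients $c_k = \sqrt{k(n-k)}$ in an orthonormal basis of $V_{n-1}$. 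The Cartan-involution intertwining can be arranged inside the same conjugation via Mostow's theorem on reductive subalgebras, and this also fixes the overall sign convention.

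The main obstacle lies in the explicit analysis of $\mfg_2$: carefully tracking how the anti-diagonal symplectic form $J$ of the paper couples the super-diagonal $2\times 2$ blocks, forcing the precise sign pattern (positive in the top half, negative in the bottom) and the traceless-symmetric constraint in the middle block. Once the shape of $\mfg_2$ is in hand, the rest reduces to the well-known orthonormal-basis computation of the raising operator on symmetric powers of the standard $\mfsl(2,\C)$-representation.
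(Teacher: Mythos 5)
Your outline follows the same skeleton as the paper's proof (diagonalize $\rho(H')$ into the standard Cartan, analyze the $\ad H$-eigenspace of eigenvalue $2$, then exploit the remaining bracket relations), but two of its central claims are false as stated, and the places where they fail are exactly where the real work of the proposition lies. First, membership in the eigenspace $\mfg_2$ does \emph{not} constrain the non-central blocks to $\mathrm{span}\{I,R\}$ or the middle block to $\mathrm{span}\{T,P\}$: the symplectic condition on a superdiagonal block matrix $\superdiag{1}{A_1,\dots,A_{n-1}}$ only forces $A_{n-k}=-\overline{A_k}$ (adjugate), so each $A_k$ with $k<n/2$ is a completely arbitrary $2\times 2$ matrix and the middle block is merely traceless (it could perfectly well have an $R$-component). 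Getting from this to the stated normal form is the heart of the matter: one must conjugate by the compact part $L\cap K$ of the Levi subgroup, use $[\rho(X'),\rho(X')^T]=H$ to see each block is invertible, and use the Cartan-involution intertwining to see each block is $c_k$ times an orthogonal matrix, after which the recursion $g_kA_k=c_kg_{k+1}$ normalizes all but the middle block. None of this appears in your proposal. Second, the commutation argument is wrong on its face: for superdiagonal block matrices, $[X,Y]=0$ reads $A_kB_{k+1}=B_kA_{k+1}$, not blockwise commutativity, and in any case $T$ and $P$ \emph{anticommute} ($TP=-PT$), so "the pair must be $(T,P)$ because the blocks must commute" cannot be the mechanism.

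The concluding appeal to uniqueness of the irreducible $\mfsl(2,\C)$-module with highest weight $n-1$ also does not suffice: it gives equivalence of representations over $\GL(2n,\R)$ (or $\GL(n,\C)$), whereas the proposition demands conjugacy by an element of $\Sp(2n,\R)$ while preserving the Cartan involution, which is a strictly finer statement. The paper closes precisely this gap with sign and determinant bookkeeping that your outline omits: a transversality argument showing $\{\alpha A_k+\beta B_k\}$ is a definite subspace of $(M_2(\R),\det)$, forcing the middle block of $X$ to have negative determinant and hence be conjugable to $c_{n/2}T$; and the Killing-form orthogonality $B_\theta(\rho(X'),\rho(Y'))=0$, forcing the $Y'$-blocks to be traceless and hence $\pm R$, with the residual sign removed by the centralizer of $X$ in $L\cap K$. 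Without these steps the normal form for $Y$ is not established.
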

	
	It will be convenient to set the following notation which appears in the proof.
	\begin{definition}\label{def:bar of 2x2 matrix} 
		We let $\overline{A}$ denote the {\em adjugate} of a $2\times 2$ matrix $A$: 
		\[ A= \begin{bmatrix}a &  b\\ c & d \end{bmatrix} \mapsto \begin{bmatrix} d & -b \\ -c & a \end{bmatrix} = \overline{A} \]
	\end{definition}
	We easily observe that $\overline{I} = I$ and $\overline{T} = -T, \overline{R} = -R, \overline{P} = -P$.
	
	\begin{proof}[Proof of Proposition \ref*{prop:standard sl2C rep}]
		By the Karpelevic-Mostow Theorem \cite{Kar53,Mos55}, we may assume that $\rho$ intertwines the Cartan involutions $\theta'(Z)=-\overline{Z}^T$ of $\mfsl(2,\C)$ and $\theta(Z)=-Z^T$ of $\mfsp(2n,\R)$.
		It then follows that it takes $H'$ to $\mfp$.
		Then up to conjugation in $K$ we may assume that $\rho(H')$ is diagonal, since $K$ acts transitively on the maximal abelian subspaces of $\mfp$. 
		Moreover, the Weyl group acts transitively on chambers, so we may further conjugate so that $\rho(H')=H$.
		
		We now show that we can conjugate $\rho$ so that $X'$ maps to $X$, except possibly at the middle block.
		The matrices $Z$ satisfying $[H,Z]=2Z$ are exactly the super-diagonal matrices given by $Z = \superdiag{1}{A_1,\dots,A_{n-1}}$.
		Such a $Z$ is in $\mfsp(2n,\R)$ if and only if $A_{n-k}=-\overline{A_k}$.
		The Levi subgroup is $L = Z_{\Sp(2n,\R)}(H)=\{\diag{g_1,\dots,g_n}: g_{n-k+1}^{-1} = \overline{g_k} \}$. 
		It acts on superdiagonal matrices by 
		\[
		\superdiag{1}{\dots, A_k ,\dots} \mapsto \superdiag{1}{\dots,g_k A_k g_{k+1}^{-1},\dots}.
		\]
		We will only conjugate $\rho$ by the subgroup $L \cap K$ where each $g_k$ is orthogonal, so that we preserve the Cartan involution.
		We know that $[\rho(X'),\rho(X'^T)]=\rho([X',X'^T])=\rho(H')=H$; it follows that each $A_k$ appearing in $\rho(X')$ is invertible. 
		Since $\rho$ intertwines the Cartan involution, it follows that each $A_k$ is an orthogonal matrix times $c_k = \sqrt{kn-k^2}$.
		Therefore, by setting $g_1=I$, the equations $g_kA_k=c_kg_{k+1}$ determine an element of $L$ taking $\rho(X')$ to $\superdiag{1}{c_1 I,\dots, c_{n/2-1}I,A,-c_{n/2+1}I,\dots,-c_{n}I}$ for some $A$ satisfying $A=-\overline{A}$. 
		Now the subgroup of $L$ given by $\{\diag{g,\dots,\overline{g}^{-1}}\}$ preserves all the blocks of $X$ except possibly the middle block $A$, and it takes $A$ to $gA\overline{g} = \det(g) gAg^{-1}$. 
		
		In order to control the middle block $A$, we must consider the image of $Y'$.
		As before, $\rho(Y')$ is superdiagonal with blocks $B_k$. 
		The fact that $[X',Y']=0$ implies that $c_{k+1}B_k=c_kB_{k+1}$ for $1 \le k \le n/2-1$. %Now with the basepoint we have that B_k are orthogonal to A_k 
		We need to see that $\det(A_k)$ and $\det(B_k)$ have the same sign for all $k$.
		Observe that the top right block of $\exp(\alpha\rho_n(X')+\beta\rho_n(Y'))$ is the product 
		\[ \exp(\alpha \rho(X') + \beta\rho(Y'))_{1n} = \frac1{(n-1)!} C_1C_2 \cdots C_{n/2-1} C_{n/2} (-\overline{C_{n/2}}) \cdots (-\overline{C_{1}}) \]
		where $C_k = \alpha A_k+\beta B_k$.
		By transversality, this block has nonzero determinant when $\alpha,\beta$ are not both zero.
		It follows that $\{\alpha A_k + \beta B_k: \alpha,\beta \in \R \}$ is a definite subspace of $(M_2(\R),\det)$ for all $k$, so indeed $\det(A_k)$ and $\det(B_k)$ have the same sign.
		The middle block is traceless, so must have negative determinant.
		Now the middle block is, up to rescaling by $c_{n/2}$, orthogonal with negative determinant. 
		Hence we can conjugate so that $A_{n/2}$ becomes $c_{n/2}T$.
		
		At this point, we have shown that $\rho$ can be conjugated to intertwine the Cartan involutions, take $H'$ to $H$, and take $X'$ to $X$.
		We know that $\rho(Y')$ is a superdiagonal block matrix of the form $\superdiag{1}{c_1B_0,c_2B_0,\dots,c_{n/2}B_0',\dots}$ with $B_0$ and $B_0'$ orthogonal, $\det(B_0)=1$ and $\det(B_0')=-1$.
		Since the bracket $[X',Y']=0$ we have $B_0'=B_0T$.
		We now consider the inner product $B_\theta(U,V)=-B(\theta U,V)$ on $\mfsp(2n,\R)$. 
		Since $\rho$ is injective and intertwines the Cartan involutions the pullback $\rho^\ast B_\theta$ agrees with $B_{\theta'}$ up to a constant scalar.
		Therefore $B_\theta(\rho(X'),\rho(Y'))=0$.
		This implies that $B_0$ is traceless; since moreover $B_0$ is orthogonal with determinant $1$, it must be $\pm R$.
		Then $B_0'=B_0T = \pm P$.
		
		We need to show that we can further conjugate $\rho(Y')$ to $Y$ while keeping all of the data above preserved. 
		For this we are only able to conjugate using the group $L \cap K \cap Z_{\Sp(2n,\R)}(X)$, 
		which contains only the block diagonal elements where $g \in \{\pm I,\pm P\}$.
		Up to conjugating by the block diagonal matrix of $P's$, we have that $B_0=R$.
		Then $B_0'=RT=P$. 
	\end{proof}
	
	\subsection{Maximal antipodality of the limit set of \texorpdfstring{$\SU(n-1,1)$}{SU(n-1,1)}}\label{sec:complex hyperbolic space}
	
	We let $\SU(n-1,1)$ denote the subgroup of $\SL(n,\C)$ preserving the indefinite hermitian form $h(u,v)=\overline{u}^TQv$ where $Q(e_1)=e_n$, $Q(e_n)=e_1$ and $Q(e_k)=e_k$ for $1<k<n$.
	We have 
	\[ \mathfrak{su}(n-1,1) = \left\{ \begin{bmatrix} a-\frac12 \Tr(X) & -\overline{v}^T & ib \\ u & X & v \\ ic & -\overline{u}^T & -a-\frac12 \Tr(X) \end{bmatrix}
	:\
	\begin{array}{l}
		a,b,c \in \R,\\
		u,v \in \C^{n-2},\\
		X\in \mathfrak{u}(n-2)
	\end{array}
	\right\} .\]
	Note that $\Tr(X) \in i\R$.
	The Cartan involution $\theta(Z)=-\overline{Z}^T$ preserves $\mathfrak{su}(n-1,1)$ and induces the Cartan decomposition $\mathfrak{su}(n-1,1) = \mfk \oplus \mfp$.
	Here $\mfk$ is the intersection $\mfk = \mathfrak{su}(n-1,1) \cap \mathfrak{u}(n)$, i.e.\ the intersection of $\mathfrak{su}(n-1,1)$ with skew-Hermitian matrices and $\mfp$ is the intersection of $\mathfrak{su}(n-1,1)$ with Hermitian matrices. 
	The intersection of $\mfp$ with real diagonal matrices is the maximal abelian subspace $\mfa \subset \mfp$ given by
	\[
	\mfa = \left\{ \begin{bmatrix} x & 0 & \cdots & 0 & 0 \\ 0 & 0 & \cdots & 0 & 0 \\ \vdots & \vdots & \ddots & \vdots & \vdots \\ 0 & 0 & \cdots & 0 & 0 \\ 0 & 0 & \cdots & 0 & -x \end{bmatrix}  
	:\
	x \in \R \right\} .
	\] 
	In the induced restricted root space decomposition 
	\[ \mathfrak{su}(n-1,1) = \mfg_{-2\alpha} \oplus \mfg_{-\alpha} \oplus \mfa \oplus \mathfrak{m} \oplus \mfg_\alpha \oplus \mfg_{2\alpha} \]
	we have 
	\[ \mfg_\alpha = \left\{ \begin{bmatrix} 0 & -\overline{z_1} & \cdots & -\overline{z_n} & 0 \\ 0 & 0 & \cdots & 0 & z_1 \\ \vdots & \vdots & \ddots & \vdots & \vdots \\ 0 & 0 & \cdots & 0 & z_n \\ 0 & 0 & \cdots & 0 & 0 \end{bmatrix} 
	:\
	z_1, \dots, z_n \in \C \right\} \]
	and 
	\[ \mfg_{2\alpha} = \left\{ \begin{bmatrix} 0 & 0 & \cdots & 0 & iy \\ 0 & 0 & \cdots & 0 & 0 \\ \vdots & \vdots & \ddots & \vdots & \vdots \\ 0 & 0 & \cdots & 0 & 0 \\ 0 & 0 & \cdots & 0 & 0 \end{bmatrix} 
	:\
	y \in \R \right\} .\]
	
	We identify $\C^n$ with $\R^{2n}$ via $(x_1+iy_1,\dots,x_n+iy_n)\mapsto(x_1,y_1,\dots,x_n,y_n)$.
	The imaginary part of $h$ is a real-valued symplectic form $\omega_h$ preserved by $\SU(n-1,1)$.
	Under the identification of $\C^n$ with $\R^{2n}$ it may be written as $\omega_h(x,y)=x^T J_h y$ for the matrix 
	\[ J_h = \begin{bmatrix} 0 & 0 & \cdots & 0 & R \\ 0 & R & \cdots & 0 & 0 \\ \vdots & \vdots & \ddots & \vdots & \vdots \\ 0 & 0 & \cdots & R & 0 \\ R & 0 & \cdots & 0 & 0 \end{bmatrix} ,\]
	which has the $2 \times 2$ block $R$ in the top-right, lower-left, and otherwise has $R$ down the diagonal blocks. 
	
	Under this identification, the horocyclic subalgebra $\mfu'=\mfg_\alpha\oplus \mfg_{2\alpha}$ is given by
	\[ \mfu' = \left\{ \begin{bmatrix} 0 & -\alpha^T \otimes I + \beta \otimes R & \gamma R \\ 0 & 0 & \alpha \otimes I + \beta \otimes R \\ 0 & 0 & 0 \end{bmatrix} 
	:\
	\begin{array}{l}
		\gamma \in \R,\\
		\alpha,\beta \in \R^{n-2}
	\end{array}
	\right\} \]
	where e.g.\ $\alpha \otimes I$ denotes an $n-2 \times 2$ block matrix according to the Kronecker product.
	It exponentiates to the group
	\begin{align*}
		U' &= \exp(\mfu') \\
		&=
		\left\{
		\begin{bmatrix} I & -\alpha^T \otimes I + \beta \otimes R & -\frac12(\abs{\alpha}^2+\abs{\beta}^2)I + \gamma R \\ 0 & I & \alpha \otimes I + \beta \otimes R \\ 0 & 0 & I \end{bmatrix} 
		:\
		\begin{array}{l}
			\gamma \in \R,\\
			\alpha,\beta \in \R^{n-2}
		\end{array}
		\right\}.
	\end{align*}
	where $\abs{\alpha}^2$ denotes the standard norm squared of $\alpha$. 
	Note that $U'$ acts simply transitively on $\Lambda \setminus \{\tau_+\}$.
	
	We also consider the horocyclic subgroup $U$ of $\Sp(\omega_h)$ acting simply transitively on  $C(\tau_+)\subset \Iso_2$.
	This is given by
	\[
	U = 
	\scalemath{0.95}{
		\left\{
		\begin{bmatrix} I & -\overline{F(u,v,w,z)}^T & q(u,v,w,z)I +bR+cT+dP \\ 0 & I & F(u,v,w,z) \\ 0 & 0 & I \end{bmatrix}
		:\
		\begin{array}{l}
			b,c,d \in \R,\\
			u,v,w,z \in \R^{n-2}
		\end{array}
		\right\}
	}
	\]
	where, for $u,v,w,z \in \R^{n-2}$, we set
	\begin{align*}
		q(u,v,w,z) & = \frac12\left(-\abs{u}^2-\abs{v}^2+\abs{w}^2+\abs{z}^2\right), \\
		F(u,v,w,z) & = u\otimes I+v\otimes R+w\otimes T+z\otimes P, \\
		-\overline{F(u,v,w,z)}^T & = -u^T\otimes I+v^T\otimes R+w^T\otimes T+z^T\otimes P.
	\end{align*}
	To see that $U$ can be parameterized in this way, observe that each element can be factored into a product of a matrix of the form
	\[ \exp \left( \begin{bmatrix} 0 & -\overline{F(u,v,w,z)}^T & 0 \\ 0 & 0 & F(u,v,w,z) \\ 0 & 0 & 0 \end{bmatrix} \right)  \]
	and a matrix of the form
	\[\exp \left( \begin{bmatrix} 0 & 0 & bR+cT+dP \\ 0 & 0 & 0 \\ 0 & 0 & 0 \end{bmatrix} \right) .\]
	
	\begin{theorem}\label{thm:MaximalAntipodalSU}
		The limit set $\Lambda \cong S^{2n-3} \subset \Iso_2(\R^{2n},\omega_h)$ associated to $\SU(n-1,1)\subset \Sp(\omega_h)$ is maximally antipodal.
	\end{theorem}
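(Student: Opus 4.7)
The plan is to mirror the structure of the proof of \Cref{thm:sl2c limit set is maximally antipodal}. The compactness argument developed there (showing that the set of flags non-antipodal to a compact antipodal subset is closed) applies verbatim, so it suffices to exhibit, for each $\tau \in \Iso_2(\R^{2n},\omega_h)$, some $\lambda \in \Lambda$ non-antipodal to $\tau$. The cases $\tau = \tau_+$ (which lies in $\Lambda$) and $\tau \notin C(\tau_+)$ (take $\lambda = \tau_+$) are trivial, so I focus on $\tau = g^{-1}\tau_- \in C(\tau_+) \setminus \{\tau_+\}$ with a unique $g \in U$, and seek $\lambda = u'\tau_-$ with $u' \in U'$ parametrized by $(\alpha,\beta,\gamma) \in \R^{n-2}\times\R^{n-2}\times\R$. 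By \Cref{lem:transverseCondition}, $\tau$ and $\lambda$ are non-antipodal if and only if the second antiprincipal minor $p_2(gu')$ vanishes, and $p_2(gu')$ equals the determinant of the top-right $2\times 2$ block $Z(\alpha,\beta,\gamma)$ of $gu'$.

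Next I would compute $Z$ explicitly in the basis $\{I,R,T,P\}$ of \eqref{eqn:M2basis}. Using the given parametrizations, the top-right block of $gu'$ equals $G_2 + G_1 V_3 + V_2$, and a direct expansion via the multiplication table of $I,R,T,P$ gives $Z = AI+BR+CT+DP$ with
\[
A = q - u\cdot\alpha - v\cdot\beta - \frac{1}{2}(\abs{\alpha}^2+\abs{\beta}^2), \quad B = b + \gamma + v\cdot\alpha - u\cdot\beta,
\]
\[
C = c + w\cdot\alpha + z\cdot\beta, \quad D = d + z\cdot\alpha - w\cdot\beta,
\]
where $q = \tfrac{1}{2}(-\abs{u}^2-\abs{v}^2+\abs{w}^2+\abs{z}^2)$ as in the parametrization of $U$. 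Completing the square rewrites
\[
A = \frac{1}{2}(\abs{w}^2+\abs{z}^2) - \frac{1}{2}(\abs{\alpha+u}^2+\abs{\beta+v}^2),
\]
so the locus $\{A=0\}$ is a (possibly degenerate) Euclidean sphere of radius $\sqrt{\abs{w}^2+\abs{z}^2}$ centered at $(-u,-v) \in \R^{2(n-2)}$, and in particular is nonempty.

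Finally, I would produce a real zero of $F(\alpha,\beta,\gamma) := \det Z = A^2+B^2-C^2-D^2$ as follows. Fix any $(\alpha^*,\beta^*) \in \{A=0\}$, and regard $F$ as a function of $\gamma$ alone: only $B$ depends on $\gamma$, and affinely, so
\[
F(\alpha^*,\beta^*,\gamma) = (B_0+\gamma)^2 - ((C^*)^2+(D^*)^2),
\]
where $B_0,C^*,D^*$ are the values at $(\alpha^*,\beta^*,0)$. This is a convex quadratic in $\gamma$ with minimum value $-((C^*)^2+(D^*)^2) \le 0$, so it admits a real zero; the corresponding $u'\in U'$ is the desired point of $\Lambda$ non-antipodal to $\tau$. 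The main technical obstacle is the organized expansion of $G_2 + G_1 V_3 + V_2$ into the $\{I,R,T,P\}$-basis, which requires careful bookkeeping of the $2(n-2) \times 2$ Kronecker-product structure together with the multiplication table for $I,R,T,P$; once this is executed, the argument concludes without any perturbation of $g$ (in contrast to the proof in \S\ref{sec:sl2c maximally antipodal}), since the convex-quadratic structure of $F$ in $\gamma$ on the nonempty level set $\{A=0\}$ already suffices.
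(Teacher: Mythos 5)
Your proposal is correct and follows essentially the same route as the paper: expand the top-right $2\times 2$ block of $gu'$ in the basis $I,R,T,P$, kill the $I$-coefficient by completing the square in $(\alpha,\beta)$, and then use the fact that only the $R$-coefficient depends (affinely) on $\gamma$ to force the determinant $A^2+B^2-C^2-D^2$ to vanish — the paper sets $B=0$ and invokes the intermediate value theorem where you solve the convex quadratic in $\gamma$ directly, which is a cosmetic difference, and your explicit coefficient formulas check out against the paper's parametrizations of $U$ and $U'$. The only point worth adding is the paper's one-line remark that \Cref{lem:transverseCondition} still applies here because the $\omega_h$-perpendicular of $\tau_{\{2\}}^{\rm opp}$ coincides with its $\omega$-perpendicular.
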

	
	Before presenting the proof, we first explain how the symplectic form $\omega_h$ is related to the symplectic form $\omega$ we discussed above.
	It is convenient to fix a transformation $f \colon \R^{2n} \to \R^{2n}$ which relates the two symplectic forms.
	When $n$ is even, we may use 
	\[ f = \frac1{\sqrt{2}} \begin{bmatrix} 
		\sqrt{2}I &  & &  &  &  &  & 0 \\ 
		& I &  &  &  &  & I &  \\
		&  & \ddots &  &  & \scalebox{-1}[1]{$\ddots$} &  & \\
		&  &  & I & I &  &  &  \\
		&  &  & T & -T &  &  &  \\
		&  & \scalebox{-1}[1]{$\ddots$} &  & & \ddots &  &  \\
		& T &  &  &  &  & -T &  \\
		0 &  &  &  &  &  &  & \sqrt{2}I \end{bmatrix} \]
	and when $n$ is odd, we modify $f$ by inserting a middle row and column with $\sqrt{2}I$ in the center block and zeros elsewhere. 
	It is easy to check that $f J f^T=J_h$, and it then follows directly that $g \in \Sp(\omega_h)$ if and only if $f^Tgf^{-T} \in \Sp(\omega)$ if and only if $f^{-1}gf\in \Sp(\omega)$.
	Moreover, a subspace $V$ is $\omega_h$-isotropic if and only if $f^TV$ is $\omega$-isotropic. 
	In particular, the standard isotropic $2$-flag and standard opposite isotropic $2$-flag are each $\omega_h$-isotropic. 
	
	\begin{proof}[Proof of Theorem \ref*{thm:MaximalAntipodalSU}]
		We will show that for any $g \in U$, there exists $g'\in U'$ such that $gg'\tau_-$ is not antipodal to $\tau_-$. 
		To show that $gg'\tau_-$ is not antipodal to $\tau_-$, we only need to show that the $2 \times 2$ block $(gu)_{1n}$ has determinant $0$.
		Indeed, we may still apply \Cref{lem:transverseCondition} because the $\omega_h$-perpendicular of $\tau_{\{2\}}^{\rm opp}$ is equal to its $\omega$-perpendicular.
		
		As before, we write $(gg')_{1n}$ in the basis $I,R,T,P$.
		We first examine the $I$ term of $(gg')_{1n}$, where $g$ and $g'$ are expressed in the parameters described above.
		The coefficient of the $I$ term is given by
		\begin{align*}
			&-\frac12 \left( \abs{\alpha}^2+2\alpha \cdot u + \abs{u}^2 +\abs{\beta}^2-2\beta \cdot v +\abs{v}^2 -\abs{w}^2-\abs{z}^2\right)\\
			=&-\frac12 \left\lvert \alpha+ u\right\rvert^2 -\frac12 \left\lvert\beta-v\right\rvert^2 +\frac12 \abs{w}^2 + \frac12 \abs{z}^2
		\end{align*} 
		which vanishes for a suitable choice of $\alpha$ and $\beta$.
		
		It is then easy to choose $\gamma$ so that the $R$ term vanishes.
		This results in a $2\times 2$ block which may have zero or negative determinant. 
		If the determinant is zero, then we are done.
		Otherwise, the determinant is negative; but in this case, observe that by choosing sufficiently large $\gamma$ we can make the determinant positive. 
		By the intermediate value theorem, there then exists some $\gamma$ such that the determinant becomes zero.
	\end{proof}
	
	\begin{lemma}
		The limit set $\Lambda \cong S^{2n-3} \subset \mathcal{F}_{2,2n-2}$ associated to $\SU(n-1,1)\subset \SL(2n,\R)$ is not maximally antipodal.
	\end{lemma}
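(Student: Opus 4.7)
The plan is to derive the non-maximality of $\Lambda$ from \Cref{prop:maximal_anosov} by producing a strictly larger $\{2,2n-2\}$-Anosov subgroup of $\SL(2n,\R)$ that contains a uniform lattice of $\SU(n-1,1)$ with infinite index. Fix a residually finite uniform lattice $\Gamma_0 < \SU(n-1,1)$ (produced via Selberg's lemma). Via the composition $\SU(n-1,1)\subset\Sp(2n,\R)\subset \SL(2n,\R)$, a generator $H$ of the $\R$-Cartan of $\SU(n-1,1)$ has eigenvalue pattern $(\lambda,\lambda,0,\dots,0,-\lambda,-\lambda)$ in the standard basis, so precisely the simple roots $\alpha_2$ and $\alpha_{2n-2}$ of $\SL(2n,\R)$ are strictly positive on $H$. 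Hence by \cite[Proposition 4.7]{GW12} the group $\Gamma_0$ is $\{2,2n-2\}$-Anosov in $\SL(2n,\R)$, and by the remark at the end of Section \ref{sec:FlagManifolds} concerning uniform lattices in rank-one subgroups, the $\{2,2n-2\}$-limit set of $\Gamma_0$ coincides with $\Lambda$.

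Next, choose any loxodromic element $\gamma \in \SL(2n,\R)\setminus \SU(n-1,1)$ whose cyclic subgroup is $\{2,2n-2\}$-Anosov---for instance a generic positive diagonal matrix with distinct entries in decreasing order, which is Borel-Anosov and in particular $\{2,2n-2\}$-Anosov. By the Combination Theorem for Anosov subgroups \cite[Theorem 1.3]{DKL19}, as invoked in the proof of \Cref{prop:maximal_anosov}, there exist finite-index subgroups $\Gamma_0^{(1)}<\Gamma_0$ and $\langle \gamma^{(1)}\rangle<\langle\gamma\rangle$ such that $\Gamma':=\Gamma_0^{(1)}\star\langle\gamma^{(1)}\rangle$ is $\{2,2n-2\}$-Anosov in $\SL(2n,\R)$. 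Since $\Gamma_0^{(1)}\subseteq \Gamma_0\cap \Gamma'$, one has $[\Gamma_0 : \Gamma_0\cap\Gamma'] < \infty$; however $\Gamma_0^{(1)}$ has infinite index in $\Gamma'$, and $\Gamma_0$ is not commensurable with $\Gamma'$ since $\Gamma_0$ is one-ended (as a uniform lattice in $\mathbb{CH}^{n-1}$) while the nontrivial free product $\Gamma'$ has infinitely many ends. This violates condition (iii) of \Cref{prop:maximal_anosov}, and by its equivalence with condition (ii) there, the limit set $\Lambda$ is not maximally antipodal in $\mathcal{F}_{2,2n-2}$.

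The main substantive point is the identification of the $\{2,2n-2\}$-flag limit set of the discrete group $\Gamma_0$ with the group-theoretic flag limit set $\Lambda$ of the Lie subgroup $\SU(n-1,1)$, which is exactly the rank-one remark cited above; the rest is a direct packaging of the Combination Theorem already employed in the proof of \Cref{prop:maximal_anosov}. One small additional check is that the inclusions above really send $\SU(n-1,1)$ into $\SL(2n,\R)$ (this is automatic since any element of $\SU(n-1,1)$ has complex determinant of modulus $1$, hence real determinant $1$ as a $2n\times 2n$ matrix).
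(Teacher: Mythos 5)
Your overall strategy is logically sound in outline: to contradict maximal antipodality it suffices, by the implication (ii)$\Rightarrow$(iii) of \Cref{prop:maximal_anosov}, to exhibit a $\{2,2n-2\}$-Anosov subgroup $\Gamma'$ containing a finite-index subgroup of a uniform lattice $\Gamma_0<\SU(n-1,1)$ with infinite index, and your identification of the $\{2,2n-2\}$-limit set of $\Gamma_0$ with $\Lambda$ is fine. But the construction of $\Gamma'$ has a genuine gap that makes the argument circular. The Combination Theorem \cite[Theorem 1.3]{DKL19} does not apply to an arbitrary pair of residually finite Anosov subgroups: it requires the union of their limit sets to be an antipodal subset of the flag manifold. (This hypothesis is unavoidable, since the limit set of the resulting free product contains the union of the two limit sets, and limit sets of Anosov subgroups are antipodal.) This is exactly why, in the proof of \Cref{prop:maximal_anosov}, the cyclic group ${\rm H}$ is chosen with limit set $\{\tau_\pm\}$ only \emph{after} a pair $\tau_\pm$ with $\Lambda\cup\{\tau_\pm\}$ antipodal has been produced, i.e.\ after the failure of maximal antipodality has been assumed. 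Your ``generic positive diagonal matrix'' $\gamma$ comes with no guarantee that its attracting and repelling flags in $\mathcal{F}_{2,2n-2}$ are antipodal to every point of $\Lambda$, and verifying this for some specific $\gamma$ is precisely the content of the lemma: a single flag that is antipodal to all of $\Lambda$ already witnesses, directly from the definition, that $\Lambda$ is not maximally antipodal, with no need for the Combination Theorem at all.

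The paper's proof supplies exactly this missing ingredient by direct computation: it exhibits an explicit unipotent element $g$ (with identity blocks in the corners) and shows that for every $g'\in U'$ the relevant $2\times2$ block of $g^{-1}g'$ equals $-\tfrac12\left(\abs{\alpha}^2+\abs{\beta}^2+2\right)I+\gamma R$, whose determinant $\tfrac14\left(\abs{\alpha}^2+\abs{\beta}^2+2\right)^2+\gamma^2$ is strictly positive; hence $g\tau_-$ is transverse to every point of $\Lambda$. To repair your argument you would need to carry out an equivalent computation for the fixed flags of your $\gamma$, at which point the detour through \Cref{prop:maximal_anosov} and \cite{DKL19} becomes superfluous.
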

	
	\begin{proof}
		With the conventions as above, take 
		\[g=\begin{bmatrix} I & 0 & I \\ 0 & I_{2n-4} & 0 \\ 0 & 0 & I \end{bmatrix} .\]
		Then for any $g' \in U'$, $g \tau_-$ is antipodal to $g'\tau_-$. In fact the block $(g^{-1}g'\tau_{-})_{1n}$ is 
		\[-\frac{1}{2}\left(\abs{\alpha}^2 + \abs{\beta}^2 + 2\right)I + \gamma R\]
		Clearly, $g\tau_-$ is transverse to $\tau_+$. 
	\end{proof}
	
	\bibliographystyle{plain}
	
\end{document}